\numberwithin{equation}{section}
\numberwithin{figure}{section}
\newtheorem{theorem}{Theorem}[section]
\newtheorem{lemma}[theorem]{Lemma}
\newtheorem{definition}[theorem]{Definition}
\newtheorem{remark}[theorem]{Remark}
\begin{document}
\title[The multi-d Stochastic Stefan Financial Model for a portfolio]
{The multi-dimensional Stochastic Stefan\\ Financial Model for a
portfolio of assets}

\author[D.~C. Antonopoulou]{Dimitra Antonopoulou$^{\# *}$}
\author[M. Bitsaki]{Marina Bitsaki$^{\ddag}$}
\author[G. Karali]{Georgia Karali$^{\dag *}$}

\thanks
{$^{\dag}$ Department of Mathematics and Applied Mathematics,
University of Crete, GR--714 09 Heraklion, Greece.}

\thanks
{$^{\ddag}$ Department of Computer Science, University of Crete,
GR--700 13 Heraklion, Greece.}

\thanks
{$^{\#}$ Department of Mathematics, University of Chester,
Thornton Science Park, CH2 4NU, UK}
\thanks
{$^{*}$ Institute of Applied and Computational Mathematics,
FORTH, GR--711 10 Heraklion, Greece.}

%
%
%

\subjclass{91G80, 91B70, 60H30, 60H15}
%
%

\begin{abstract}
The financial model proposed in this work involves the
liquidation process of a portfolio of $n$ assets through sell or
(and) buy orders placed, in a logarithmic scale, at a (vectorial)
price $x\in\mathbb{R}^n$, with volatility. We present the rigorous
mathematical formulation of this model in a financial setting
resulting to an $n$-dimensional outer parabolic Stefan problem
with noise. The moving boundary encloses the areas of zero
trading, the so-called solid phase. We will focus on a
 case of financial interest when one or more markets are
considered. In particular, our aim is to estimate for a short time
period the areas of zero trading, and their diameter which
approximates the minimum of the $n$ spreads of the portfolio
assets for orders from the $n$ limit order books of each asset
respectively.

 In dimensions $n=3$,
and for zero volatility, this problem stands as a mean field
model for Ostwald ripening, and has been proposed and analyzed by
Niethammer in \cite{nietthes}, and in \cite{JDE12} in a more
general setting. There in, when the initial moving boundary
consists of well separated spheres, a first order approximation
system of odes had been rigorously derived for the dynamics of the
interfaces and the asymptotic profile of the solution. In our
financial case, we propose a spherical moving boundaries approach
where the zero trading area consists of a union of spherical
domains centered at portfolios various prices, while each sphere
may correspond to a different market; the relevant radii
represent the half of the minimum spread. We apply It\^o calculus
and provide second order formal asymptotics for the stochastic
version dynamics, written as a system of stochastic differential
equations for the radii evolution in time. A second order
approximation seems to disconnect the financial model from the
large diffusion assumption for the trading density. Moreover, we
solve the approximating systems numerically.
\end{abstract}
\maketitle \pagestyle{myheadings}
\thispagestyle{plain}
%
%
%
\section{Introduction}
\subsection{A Stefan problem for the liquidation of a portfolio}
Decision making tools play an important role in quantifying the
different sources of uncertainty in portfolio management (such as
prices, market liquidation, etc.), and on deriving efficient
portfolio strategies. Many studies are focused on the portfolio
selection problem where the measuring of the performance of
portfolios is based on various criteria such as the variance of
expected returns, \cite{mark}, risk minimization and utility
maximization, \cite{merton}.

Liquidation of a portfolio of $n$ assets, is the process of
transforming the aforementioned set of assets into cash, for
example through sell and buy orders. A certain question of
significant financial importance that naturally arises concerns
the determination of a profitable price of trading at a specific
time $t$. Moreover, the investor would like to predict an optimal
time for liquidation and the dynamics of the spreads, even for
short time periods.

The proposed model in this paper is applicable to the next
strategy of liquidation summarized as follows. \\
\textit{Sell and buy orders from the limit order book when one or
more markets are considered:}
\begin{enumerate}
\item[--]
We observe the evolution of prices for a portfolio of $n$ assets
of analogous properties traded during the same financial day in
one or different (but interacting) markets, for example
currencies in European Union markets. An asset is defined as
liquid when it is traded through sell or buy orders, and the
prices of zero trading per asset define the relevant spreads. We
aim to estimate (predict) through time an average spread at each
market. The initial data of the problem will be taken from the
limit order books, where the bid and ask prices, as well the
volume of trading are included at discrete times in a financial
day. The bid and ask prices at time $t=0$ will induce the initial
spreads, while the total liquidity will be estimated by using
information from all the interacting markets. \vspace{0.2cm}
\item[--]
Under the assumption of infinitesimally small `tick size', that
is a minimum permitted price increment of the financial market
tending to zero, we may consider continuous price models
(continuous space-like coordinates). A Stefan problem for a Heat
equation with stochastic volatility posed on the liquid phase
will describe the diffusion of the sell or (and) buy orders in
time.
\end{enumerate}

We note that an alternative trading strategy would involve
stop-loss orders, which consist standing orders to sell an asset
when its price drops by a certain percentage. However, this
approach is only temporarily effective, for example during a
breakdown swing of the market, and may not be the optimal one for
a long-term portfolio performance, \cite{zi}. On the other hand,
there exists a behavioral finance characteristic called as `the
disposition effect', which describes the tendency of investors to
sell assets when their price is increasing rather than
decreasing; this effect is difficult to be predicted, see for
example in \cite{he}, for a model of asset liquidation, where the
investors realize utility over gains and losses, or in
\cite{CL,CST} for various financial models estimating the prices
dynamics related to the limit orders market.

\subsection{Motivation for the proposed model}
There exist so far some interesting and rigorous results on
modeling and well posedness of financial Stefan problems for the
Heat equation, even with noise, but up to now are restricted only
in dimension one, where the price of one asset is considered; see
the pioneering works of Ekstr\"om, Zhi Zheng and M\"uller in
\cite{revpap,phd2,main1} for some 2-phases 1-dimensional
stochastic Stefan systems, for sell and buy orders of one asset.
X. Chen and Dai proposed and analyzed an optimal strategy for
multiasset investment on correlated risky assets of a portfolio,
\cite{chenf}, while Altarovici, Muhle-Karbez and Soner in
\cite{amks}, presented an optimal policy and leading order
asymptotics related to multiple risky assets trading with small
and fixed transaction cost.

A natural extension is to consider more than one assets
consisting a portfolio and state analogous Stefan problems in
dimensions $n\geq 2$ with stochastic volatility.

Various deterministic parabolic Stefan problems have been
extensively used for describing the phase separation of alloys
and a relevant mathematical theory is already well established.
See for example the results of Niethammer, X. Chen and Reitich,
Antonopoulou Karali and Yip in \cite{nietthes,XFC1,JDE12}, or for
the quasi-static problem in \cite{abc,af,afk1,afk2,niet2, chen1,
chen2, chen3}. Note that the quasi-static problem approximates the
parabolic one when the diffusion tends to infinity as in the case
of a very large trading activity.

We shall state a stochastic multi-dimensional moving boundary
financial problem, and will enlighten in a financial setting for
our parameters the existing theory for the boundary dynamics
(which is only developed in the deterministic version), mainly
motivated by the work of Niethammer in \cite{nietthes}. We
provide a financial interpretation of the Gibbs Thomson condition
involving the mean curvature of the interface, and propose a
simplified model formulation for the approximation of the initial
moving boundary by spheres; their radii may implement the price
ranges of zero trading around the portfolio intrinsic values. In
this case, the mathematical theory for zero volatility,
\cite{nietthes}, predicts the increasing of the large radii
spheres at the expense of the smaller ones, as for example when
the price strongly surpasses the assets intrinsic value (financial
bubbles).

We use It\^o calculus and derive second order formal asymptotics
for the stochastic dynamics of the moving interface and the
solution of the Heat equation of the Stefan problem. These are
presented as a system of stochastic odes, which is solved
numerically. Our numerical results indicate that in contrast to
the deterministic problem, where static solutions (equilibrium of
one sphere or of many equal spheres) evolve very slowly, when
noise is present initial states of even one sphere may decrease
their volume in relatively small times (or increase). The previous
describes a faster liquidation process (or solidification
process) for non zero volatility.

The investigation of well posedness and regularity for the fully
stochastic version consists a work in progress and it is not
considered in this paper. Moreover, the rigorous mathematical
derivation of the stochastic dynamics for the moving boundary,
which as we shall see involve the mean curvature of the surface,
remains a challenging open problem.

\section{The $n$-dimensional outer stochastic Stefan problem for a portfolio}
\subsection{The mathematical statement}
We consider a \textit{portofolio} of $n\geq 2$ different assets,
and define their trading prices through sell or (and) buy orders
by $x_1$, $x_2$,$\cdots$,$x_n$ respectively in a logarithmic
scale. So, $x:=(x_1,x_2,\cdots,x_n)$ in general belongs to
$\mathbb{R}^n$ (and not restricted as the usual asset prices in
$\mathbb{R}^{+n}$), cf. in \cite{phd2}. Each asset may consist of
one only share, and thus $x_i=x_i(t)$ is the price of the specific
share when traded at time $t$ (enwritten, before the logarithmic
rescaling, in the limit order book of this share). 

Let $w=w(x,t)$ be the \textit{fluctuating density, cf.
\cite{main1}, or volume, cf. \cite{phd2}}, of the portfolio placed
at price $x=(x_1,\cdots,x_n)$. We observe the evolution of the
density $w$ in time, for $t\in[0,T]$ and pose a stochastic heat
equation on a \textit{liquid phase}$:=
\mathbb{R}^n-\mathcal{D}(t)$ with boundary $\Gamma(t)$; this
phase is defined as the complement in $\mathbb{R}^n$ of the areas
of zero trading (i.e., the complement of the solid phase
$\mathcal{D}(t)$).

The liquid phase domain for these $n$ assets describes the set of
prices $x\in\mathbb{R}^n$ on which trading is executed
 and is of course
unknown, while it is one of the portfolio characteristics that we
would like to determine through the moving boundary problem. The
decision of trading, and thus the liquid phase domain, is induced
by the distance of the vector
$$x:=(x_1,x_2,\cdots,x_n),$$ from the boundary $\Gamma=\Gamma(t)$ of
$\mathcal{D}(t)$, and this distance is given by
$${\rm
dist}(x,\Gamma):=\displaystyle{\inf_{y\;on\;\Gamma}}\|x-y\|,$$
where $\Gamma$ is a curve ($n=2$), or surface ($n=3$), or
hyper-surface ($n\geq 4$). Here, $\|\cdot\|$ denotes the
euclidean norm in $\mathbb{R}^n$.

The total price of the portfolio is defined by the euclidean norm
of $x$
$$\|x\|:=\Big{(}x_1^2+x_2^2+\cdots x_n^2\Big{)}^{1/2}.$$
\begin{remark}
The euclidean norm in $\mathbb{R}^n$ was used in the mathematical
analysis of the physical problem of phase separation of alloys
and the asymptotics formulae derived in \cite{nietthes}
(involving volumes and surface areas in $\mathbb{R}^n$ measured
with this norm) and seems to be proper for the multi-dimensional
case; we will avoid thus to define the total price by different
measures, like for example the average value of all assets, or
the sum of the $n$ prices, which may fit better to one-dimensional
approaches.
\end{remark}

We shall consider the following stochastic outer Stefan problem
for the Heat equation with noise
\begin{equation}\label{stef}\left\{
\begin{aligned}
\partial_t w=&\alpha\Delta w
+\sigma({\rm
dist}(x,\Gamma))\dot{W}(x,t),\;\;x\in\mathbb{R}^n-\mathcal{D}(t)\;\;\mbox{(`liquid' phase)},\;\;t>0,\\
w=&w_0=0,\;\;x\in\mathcal{D}(t)\;\;\;\;\;\;\;\;\;\;\;\;\;\;\;\;\;\;\;\;\;\;\;\;\;\mbox{(`solid' phase)},\\
w=&-k+w_0\;\;{\rm on}\;\;\Gamma(t)\;\;\;\;\;\;\;\;\;\;\;\;\;\;\;\;\;\mbox{(Gibbs Thomson condition)},\\
V=&-\nabla w \cdot \eta\;\;{\rm
on}\;\;\Gamma(t)\;\;\;\;\;\;\;\;\;\;\;\;\;\;\;\;\;\mbox{(change of
liquidity}\\
&\mbox{\;\;\;\;\;\;\;\;\;\;\;\;\;\;\;\;\;\;\;\;\;\;\;\;\;\;\;\;\;\;\;\;\;\;\;\;\;\;\;\;\;\;\;\;\;\;\;\;driven by the strength of trade,}\\
&\mbox{\;\;\;\;\;\;\;\;\;\;\;\;\;\;\;\;\;\;\;\;\;\;\;\;\;\;\;\;\;\;\;\;\;\;\;\;\;\;\;\;\;\;\;\;\;\;\;\;also called Stefan condition)},\\
\Gamma(0)&=\Gamma_0,
\end{aligned}
\right.
\end{equation}
where $k$ is the mean curvature of $\Gamma$, $V$ the velocity of
$\Gamma$, $\dot{W}(x,t)$ a space-time noise, and $\sigma$ is a
noise diffusion. Moreover, $\alpha>0$ is the positive constant
coefficient of the Laplacian operator modeling the diffusion of
the trading that stabilizes market's variations, cf. also in
\cite{zi}. We also impose a condition at infinity (far-field
value) of the form
$\displaystyle{\lim_{r\rightarrow\infty}}w(r,t)=w_{\infty}(t)$.

The coefficient $\alpha$ reflects the liquidity of the market, and
will be referred as \textit{liquidity coefficient}. An increasing
value for $\alpha$ implies that more intense active trading
occurs, and thus, it is expected that the solid phase (for
example the spreads domain in a case of interest) will become
smaller and will reach at an equilibrium earlier in time. For
simplicity, we assume that $\alpha$ remains constant for any
$t\in[0,T]$ which is a reasonable assumption, when evolution is
observed in short time intervals, as for example during a day.

The noise diffusion $\sigma$ is a \textit{volatility} that
depends on the distance of the prices vector $x$ from the
liquidity boundary $\Gamma$. In dimension one, in
\cite{phd2,main1}, the authors proposed a volatility of the form
$\sigma=\sigma(|x-S^*(t)|)$ for $S^*(t)\in\mathbb{R}$ the mid
price of one share from the limit order book, and $x\in\mathbb{R}$
its price in a logarithmic scale. This represents the dependence
of the noise strength on the distance of the current price $x$
from an average price $S^*$ (mid price there), which is equal to
$|x-S^*|$, when the spread is zero. The analogous argument for a
model permitting non zero spreads, in dimensions $n$ where the
distance is measured by the euclidean norm in $\mathbb{R}^n$,
leads to a volatility definition of the form
$$\sigma=\sigma({\rm
dist}(x,\Gamma(t))).$$ The volatility of an asset is a measure of
the dispersion of the prices of the asset as it evolves in time.
If the price remains stable the volatility is low and the risk
for holding (not trading) the asset is low. In the case of a
portfolio the volatility can be generalized to be a measure of
risk of the investment. Let us assume a price in the liquid
phase; as the distance of the price vector from the boundary of
the liquid phase increases the risk for not making a transaction
increases as well, and the volatility achieves a higher level.

In the proposed problem, by definition, in the solid phase zero
trading occurs, and therefore, the density of trading therein is
zero for any time $t$, i.e., $w_0(t)=0$.
\begin{remark}
Even if not analyzed in this paper, we point out that in various
one-dimensional Stefan problems for limit orders (applied under
the simplification assumption of a zero spread though) a usual
choice for the density $w_0$ is again zero. However, in these
approaches (due to the vanishing spread), $w_0=0$ only models the
immediate execution of the sell or buy orders from the limit order
book of one share (when decided), and seems feasible enough, due
to the direct computerized interaction of the network of various
trade markets, see in \cite{phd2}. A positive constant $w_0>0$
could be also considered in dimensions one for these cases, when
a certain delay of transactions is inserted in the model. Note
that a simple change of variables of the form $w\rightarrow
w-w_0$, as the spde
 and the Stefan condition for the velocity are linear, leads to a
zero delay model.
\end{remark}

The Stefan b.c. describes the velocity of the interface
$\Gamma(t)$, which of course is given by the jump of the gradient
of the density $w$ along $\Gamma$. In our problem, in the solid
phase $w=w_0={\rm const}=0$, so the jump involves only the
gradient of the density in the liquid phase, since the other term
in the difference is vanishing, cf. also \cite{nietthes}. A
Stefan b.c. of this kind, where the velocity is given by the jump
at a mid price (the moving boundary then consists of a moving
point on a line), has been already proposed in \cite{main1,zi},
for a system of $2$ equations for sell and buy orders
respectively in dimension one, and describes the change of the mid
price driven by the strength of the ask price. In our case it is
the liquidity area that changes and this change is driven by the
strength of trading since the evolution of the (total) density $w$
(volume of transactions in sell and buy orders) for all the $n$
assets is given by one equation but posed in dimensions $n$.

A detailed motivation for the Gibbs-Thomson b.c. condition and
its financial interpretation will be presented at a separate
section, in the sequel.

Two-phases elliptic Stefan problems with analogous b.c. appear as
the sharp interface limit of the Cahn-Hilliard equation,
\cite{abc}, or the stochastic limit of Cahn-Hilliard equation
with noise, \cite{abk}. Moreover, considering Allen-Cahn or the
stochastic Allen-Cahn equation, the law of motion on the sharp
interface limit is described by a velocity given by the mean
curvature or stochastic mean curvature respectively (and not by
the jump); see for example the classical results of Evans, Soner,
Souganidis in \cite{ess} for the deterministic equation, and this
of Funaki for the stochastic case with mild noise, \cite{Fun99}.

Let us describe the mathematical statement of \eqref{stef} in
terms of a moving boundary problem. It is a one-phase outer
parabolic Stefan problem, since the parabolic type spde
(Stochastic Heat equation) is posed only on one phase, the liquid
phase, placed outside the solid one. At the initial time $t:=0$,
the solid phase $\mathcal{D}_0:=\mathcal{D}(0)$ is considered
already formed as a bounded domain in $\mathbb{R}^n$ and thus,
its boundary $\Gamma_0:=\Gamma(0)$ is given. Obviously, due to
boundedness, $\Gamma_0$ is a closed hyper-surface of
$\mathbb{R}^{n-1}$, embedded in $\mathbb{R}^n$. As it is usual to
Stefan problems from phase separation, $\Gamma_0$, in a more
general setting, is a union of such surfaces, \cite{niet2,abc}.
So, at the initial time, \eqref{stef} is fully determined by one
Stochastic Heat equation posed on the unbounded domain
$\mathbb{R}^n-\mathcal{D}_0$ with non-homogeneous Dirichlet b.c.
on the boundary $\Gamma_0$ involving the mean curvature of
$\Gamma_0$. The solution $w$ of the above, through $\nabla w$,
defines then the velocity $V$ of the moving boundary $\Gamma(t)$,
and therefore its evolution and shape at a next time, and this
determines the new unbounded domain with boundary $\Gamma$ where
the spde is posed, and so on.

In \cite{nietthes}, the initial boundary $\Gamma_0$ has been
assumed to consist of a union of spherical surfaces; therein, in
the deterministic setting and in dimensions $n=3$, the same
problem \eqref{stef}, for $\alpha>0$ has been considered but for
$\sigma=0$, and for a different application from material
science, the Ostwald Ripening of alloys. In particular,
Niethammer in \cite{nietthes} analyzed a mean field approximation
model where the solid phase is a union of spherical domains with
fixed centers and evolving radii, and derived the dynamics of
radii. Moreover, she proved  well posedeness for the static
(elliptic) problem with undercooling, \cite{niet2}. Antonopoulou,
Karali and Yip in \cite{JDE12} proved well posedeness for the
full parabolic problem with undercooling and obtained the
modified dynamics of radii; cf. also the work of X. Chen and
Reitich for the two-phases deterministic Stefan problem,
\cite{XFC1}, and in \cite{af, afk1, afk2} for a quasi-static
version.

\begin{remark}
The existing rigorous mathematical literature on well posedeness
and dynamics of multi-dimensional two-phases Stefan type problems
(cf. for example \cite{abc,XFC1} and the references therein),
concerns so far the deterministic problem where the same exactly
pde is posed on the two different phases, i.e. on
$\mathbb{R}^n-\Gamma$. However, if the trading is to be
classified in sell and buy orders corresponding to a liquid and a
complementary solid phase respectively, the model would demand a
system of $2$ equations of the form appeared in \eqref{stef} with
different parameters, and it has been very recently analyzed and
only in one dimension, \cite{zi,main1}. In higher dimensions,
$n\geq 2$, there exist many open questions on existence,
regularity and dynamics for Stefan problems posed as a system of
two equations even in the absence of noise.
\end{remark}
\subsection{The spherical boundaries stochastic Stefan model for $n=3$}
 In the general stochastic Stefan problem \eqref{stef} we set $n=3$. So, we consider that the initial solid phase
 is in $\mathbb{R}^3$. Moreover we assume that the initial solid phase is the union of $I$
spherical domains, and, as in \cite{nietthes}, that during
evolution the centers remain constant.

So, we define $$\mathcal{D}(t):=\displaystyle{\cup_{i\in
I}}B_{R_i}(t),$$ for $B_{R_i}(t)$ a ball of radius $R_i(t)$ and
fixed center $x_{c}^i\in\mathbb{R}^3$ for $i \in I$. The boundary
$\Gamma(t)$ at time $t$ is the union of the $I$ spherical
boundaries
$$\Gamma_i(t):=\partial B_{R_i}(t),$$
and so, given by
$$\Gamma(t):=\displaystyle{\cup_{i\in
I}}\Gamma_i(t).$$

The problem \eqref{stef} is transformed into $I$ problems, given
for each $i\in I$ by
\begin{equation}\label{stef2}
\begin{split}
\alpha^{-1}\partial_t v(x,t)=&\Delta v +\alpha^{-1}\sigma({\rm
dist}(x,\Gamma))\dot{W}(x,t),\;\;x\in\mathbb{R}^n-\mathcal{D}(t)\;\;{\rm('liquid'\;
phase)},\;\;t>0,\\
v=&0,\;\;x\in\mathcal{D}(t)\;\;\;\;{\rm('solid'\;phase)},\\
v=&\frac{1}{R_i(t)}\;\;{\rm on}\;\;\Gamma_i(t),\;\;\;\;\;\;\;\;\;\;\;\;\;\;\;\;\;\mbox{(Gibbs Thomson condition),}\\
\dot{R}_i(t)&=\frac{1}{4\pi R_i^2(t)}\int_{\Gamma_i(t)}\nabla v\cdot \eta,\;\;\;\;\;\mbox{(Stefan condition),}\\
\Gamma(0)&=\Gamma_{0},
\end{split}
\end{equation}
where, as in \cite{nietthes}, we applied the transformation
$$v:=-w,$$ replaced the curvature of the sphere by the inverse of its radius, and integrated the last b.c. along the spherical
boundary; for the general transformation see at the first lines
of pg. 125 of \cite{nietthes}, in particular, we took $H=K$, and
all appearing constants equal to $1$ except of $T_0$ taken as
$T_0:=0=w_0$ and of $C$ taken as $C:=\alpha^{-1}$. Also see the
statement of the transformed problem at pg. 127 of
\cite{nietthes}, for zero volatility $\sigma$. Note that
$\dot{R}_i$ denotes the time derivative of the $i$ radius. The
far-field value takes the form
$\displaystyle{\lim_{r\rightarrow\infty}}v(r,t)=v_{\infty}(t)$ and
$v_\infty(t)$ consists one of the unknowns of the problem.

When volatility is not a vanishing quantity, then the integral of
the b.c. along $\Gamma_i(t)$ is formally taken, assuming that the
sphere remains a sphere during evolution, but with stochasticly
fluctuating radius.

\begin{remark}
A ball in $\mathbb{R}^n$, is the logarithmic image of a bounded
simply connected domain in the initial coordinates in
$\mathbb{R}^{n+}$, where the initial real values of the portfolio
are set.
\end{remark}
\subsection{Financial interpretation of the Gibbs Thomson
condition} Ostwald in \cite{os}, first observed that during the
late stages of phase separation also called as coarsening, the
evolution favors the minimization of surface energy of the inner
interfaces separating the phases. Considering the case of
liquid/solid phase transitions, the previous is translated to the
reduction of the surface area of the solid phase, where the
diffusional mass (measured by the integral of our density
solution in the liquid phase) is transferred from regions of high
interfacial curvature to regions of low interfacial curvature,
\cite{nietthes}. The Gibbs Thomson condition of problem
\eqref{stef} involving the curvature $k$ is an effective
approximation of the above growth law and is extensively used to
the literature of multi-dimensional Stefan problems, where the
geometric characteristic of the curvature of curves, $n=2$, or
surfaces $n\geq 3$, (which form the phase separation sharp
interfaces), has a meaning.

When the solid phase at time $t$ consists of $2$ well separated
spherical domains of radii $R_1(t)>R_2(t)$ and thus of curvatures
$\frac{1}{R_1(t)}<\frac{1}{R_2(t)}$, in later times as separation
evolves, the growth of the larger sphere is expected (here this
of radius $R_1$) at the expense of the smaller. In fact this is
rigorously proved for $n=3$ and zero volatility in
\cite{nietthes,JDE12} for the Stefan problem of type
\eqref{stef2}, and for a more general case where kinetic
undercooling acts on the Gibbs Thomson condition.

Moreover in the case of solid phase of a more complex geometry,
the aforementioned optimization constraint set by the growth law
leads during evolution to minimizing area moving boundaries close
to spheres, cf. \cite{afk1,afk2}.

In the financial setting, let us consider that at our initial
time, the solid phase consists of $2$ well separated balls with
centers two marginal estimations, or even real but different
observations for the vectorial price $x$ of our portfolio, as for
example when two markets are participating by trading all the
$n=3$ assets during the same period of one financial day; $n=3$
currencies in European Union is a basic case.

In accordance to \eqref{stef2} notation, we define the fixed
centers by
$$x_{c}^1,\;\;\;\; x_{c}^2,$$ and the initial radii by
$$R_1(0)<R_2(0),$$ small enough, which as we shall analyze in detail in a following section
represent the half of the minimum of the $n$ spreads respectively
at the given initial time, and obtain the well separated balls
condition
\begin{equation*}\label{ni}
\|x_{c}^1-x_{c}^2\|>>R_1(0)+R_2(0).
\end{equation*}

\begin{remark}
The scaling of the Stefan problem \eqref{stef2} is of significant
importance, this being related to the mean field assumption of an
initial solid phase consisting of $I$ well separated spherical
domains with relatively small radii, that do not touch during
evolution; a result is a comparatively very large magnitude for
the liquidity coefficient $\alpha$.

For example, see also in \cite{nietthes} for an analogous
condition after rescaling, in dimensions $n=3$ and for
$\sigma=0$, the condition \eqref{volfr}, which we will analyze
more extensively in a following section, must hold, i.e.
\begin{equation*}\label{volfr}
I\displaystyle{\max_{i=1,\cdots,I}}R_i(0)<<\mathcal{O}(\alpha^{4/9}).
\end{equation*}
\end{remark}

As time passes, for the problem \eqref{stef2}, and when
$\sigma:=0$, the theory predicts that the smaller ball of radius
$R_1(0)$ will begin to shrink while the other will grow. This
means that the smaller (minimum) spread will be reduced in the
first market while the larger (minimum) spread will increase in
the second market. The above is indeed expected since small
spreads are observed to highly traded assets and tend thus to
reduce. On the other hand a comparatively large spread is an
index of low trading and of higher risk for the investor.

\begin{remark}
Considering the problem \eqref{stef2} ($n=3$), for $\sigma=0$, for
$I$ initial balls, there exist the so-called vanishing times
$t_{{\rm v}i}$ for the spherical domains constituting the solid
phase, \cite{nietthes}. A case of interest is the equilibrium
where only one of the $I$ balls survives and stands as the final
solid phase, while its diameter approximates a maximum spread
value in the time interval $[0,T]$. Here, $T$ is equal to the
last vanishing ball time. However, there exist equlibria of more
than one balls of equal radii.

In the case of two balls for the initial solid phase ($I:=2$),
with radii ordered as follows
$$R_1(0)<R_2(0),$$
since the smaller will eventually vanish, let us say at
$t:=t_{\rm v}$, the maximum spread in $[0,T]$ coincides to
$$2R_2(t_{\rm v}).$$ The above, gives a useful
prediction for the future optimal investment of the portfolio.

The evolution of the $I$ radii, and thus, their values and
vanishing times are well estimated from the approximating dynamics
for the radii which are given in a following section by the ODEs
$\eqref{1.6}$, $\eqref{1.5*}$.
\end{remark}

\begin{remark}
The quasi-static version of the parabolic problem \eqref{stef2},
assumes a diffusion coefficient $\alpha\rightarrow\infty$. For
this model and for $\sigma:=0$, volume conservation holds for the
solid phase, see Lemma 2 at pg. 135 of \cite{nietthes}. Thus, the
diameter of the last surviving ball is given by
\begin{equation}\label{i1}
2\Big{(}\displaystyle{\sum_{i\in I}}R_i(0)^3\Big{)}^{1/3},
\end{equation}
which is the largest observed spread and depends strongly on the
initial definition of the solid phase, in particular on the
initial radii, i.e. the initial spreads.
\end{remark}
\subsection{A liquidation strategy} Efficient strategies for
portfolios management are based on the quantification of the
uncertainty of prices and of market liquidity. The portfolio
optimal performance is restrained by the control of the variance
of the expected returns under minimum risk investment policies
where a certain utility function is maximized, \cite{mark,merton}.

In \cite{mark}, an investor wants to allocate his initial amount
among a given number of assets where the expectations of returns
are taken as known. The criterion for determining the set of
optimal portfolios, that is, the optimal weights (proportions of
total wealth) assigned to the assets, is to minimize the variance
of the expected returns. However, the limitation of this approach
is that the expected returns (which are estimated by financial
data) are assumed to be constant in time; this is also called the
static optimization problem.

Merton, in \cite{merton}, performs a continuous-time analysis for
the problem of optimal portfolio selection where the rates of
return of the individual assets are generated by a Wiener
Brownian motion process. The optimal proportions of total wealth
(or weights) that are invested in each asset for any given time
is derived by maximizing its expected utility as a function of
wealth.

As we shall describe, the solution of the Stefan problem
\eqref{stef} with properly defined parameters can contribute as a
recommendation tool to an investor who already holds a portfolio
of assets and wishes to liquidate to cash some fraction of each
one of them during a time interval $(0,T)$.

We consider a financial market of $n$ risky assets with prices per
share $$p_i(t)\in\mathbb{R}^+,\;\;i=1,\cdots,n,$$ at time $t$. An
investor holds a portfolio of these assets with allocations
$$s(t)=(s_1(t), \cdots, s_n(t));$$ here, $s_i(t)$
is the number of shares of the $i$ asset at $t$. So, the value of
the portfolio at time $t$ is given by
\begin{equation}\label{m2}
\mathcal{V}(t)=\sum_{j=1}^{n}{s_j(t)}p_j(t).
\end{equation}
When the portfolio is liquidated, $p_i(t)$ can be specified on
real time by the limit order book of asset $i$, or be predicted
in advance. In general, $s_i$ varies in time.

Let $f_i(t)$ denote the fraction of the initial amount of asset
$i$ that the investor wants to sell at time $t$ ($f_i \in [0,1]$).
The allocation of asset $i$ at time $t$ can be modeled by
\begin{equation}\label{md3}
s_i(t):=s_i(0)-f_i(t)s_i(0)=(1-f_i(t))s_i(0),
\end{equation}
 for $s_i(0)$ some initial given
allocation of asset $i$ at the initial time $t=0$. Here, we
consider that, for any $i$, $f_i$ are defined to satisfy
$f_i(0)=0$. This implies that at the initial time the investor
will never choose to sell any share of his portfolio. Therefore,
by replacing $s_i$, the investor's portfolio allocation is given
by the vector
\begin{equation}\label{m4}
s(t)=\Big{(}s_1(0)-f_1(t)s_1(0), \cdots,
s_n(0)-f_n(t)s_n(0)\Big{)}.
\end{equation}

We consider as time $t \in [0,T]$ the first instant that the
investor sells parts of his portfolio, and thus, no transaction
has been performed in the interval $[0, t)$. Time $t$ is a part of
investor's strategy that will be derived based on information
offered by the evolution of the whole market as shown in the next
section. As soon as a transaction is performed at time $t$, the
model is initiated and the time is set to $0$ again. So, a new
period $[0,T]$ starts for the investor for future transactions.

 This trading activity will affect the portfolio performance
in terms of returns and risk. Liquidation strategies are
developed and applied in order to ensure that the remaining
portfolio will have a high rate of return.

We fix a time $t$, and define
\begin{equation}\label{m5}
C(t):=\sum_{j=1}^{n}{f_j(t)s_j(0)}p_j(t),
\end{equation}
as the amount of consumption resulted by the liquidation of the
portfolio.

At the asset $i$, for any $i=1, \cdots, n$, we assign at time $t$
the non-negative weight $z_i=z_i(t),$ defined as
\begin{equation}\label{m1}
z_i(t)=\frac{s_i(t)}{\displaystyle{\sum_{j=1}^{n}}{s_j(t)}},
\end{equation}
 so that
$$\displaystyle{\sum_{i=1}^{n}}{z_i(t)}=1,\mbox{ and }z_i(t)\in[0,1].$$
Let $\mathcal{R}(t)$ be the rate of return of the remaining
portfolio at time $t$; $\mathcal{R}(t)$ is defined as
\begin{equation*}\label{m6}
\mathcal{R}(t):=\sum_{i=1}^{n}{z_i(t)\frac{p_i(t)}{p_i(0)}}=\sum_{i=1}^{n}
{\frac{(1-f_i(t))s_i(0)}{\displaystyle{\sum_{j=1}^{n}}(1-f_j(t))s_j(0)}\;\frac{p_i(t)}{p_i(0)}},
\end{equation*}
where we replaced the weights $z_i$ by using \eqref{m1} and
\eqref{md3}.

We define as utility $U$ of the investor a measure that captures
the satisfaction he obtains when involved in trading activities
concerning his portfolio. More precisely,
\begin{equation}\label{uti}
U=U(\mathcal{V}(t), C(t)),
\end{equation}
is assumed to be a strictly concave function of the value
$\mathcal{V}$ of his portfolio, and of the consumption level $C$
that is liquidated at time $t$.
\begin{definition}\label{def2}
We define the liquidation strategy at time $t$ to be the vector
of fractions $$f(t)=(f_1(t), \cdots, f_n(t)),$$ for $f$ the
solution of the following maximization problem with constraint:
\begin{equation}\label{max}
\begin{split}
&\max_{f}\,{U(\mathcal{V}(t),C(t))},\\
{\rm s.t.}\;\;\;\;\;\;\;\;
\sum_{j=1}^{n}&{s_j(0)}-\sum_{j=1}^{n}{s_j(t)} \le w^*,
\end{split}
\end{equation}
for $w^*$ the level of the available total volume of all the
shares of the assets in the portfolio. Here, remind that the
fractions $f_i$ appear in the formulae of $\mathcal{V}$ and $C$,
cf. \eqref{m2}, \eqref{m4}, \eqref{m5}, while $U$ is the utility
function given at \eqref{uti}.
\end{definition}

In order to solve the optimization problem \eqref{max} (which is
not in the aims of the current work), one has first to estimate
the prices vector $p(t)=(p_1(t), \cdots, p_n(t))$ of the shares
which appear in the definition of $\mathcal{V}(t)$ and $C(t)$,
and $w^*$. The solution of the Stefan problem \eqref{stef2}, in
particular the moving boundary, provides a prediction in a
logarithmic scale (for example the spreads) for the price vector
$p(t)$ at a given time $t$. This information is crucial for
maximizing the utility function.


\bigskip

\bigskip
Evidently, the liquidity parameter $\alpha$ and the choice of
initial solid phase, even if defined mathematically, when replaced
should be related to a specific financial application since they
concern the market's characteristics.

Our aim in the following section is to address a main financial
application of the Stefan problem as stated in \eqref{stef2},
which is posed in a logarithmic scale for the spatial coordinates
$x$, where the trading (i.e., the diffusion of the density) is
observed to one or more financial markets.

The spatial coordinates defining $x=x(t)=(x_1,\cdots,x_n)$ will
correspond to the prices of trading (sell/buy) at time $t$ of $n$
different shares, while the solid phase diameter will approximate
the minimum of the $n$ spreads for orders from the limit order
book.

We shall properly define the prices $x_i$, $i=1,\cdots,n$ and the
initial data of \eqref{stef} in the version \eqref{stef2}:
$\alpha$, $\mathcal{D}(0)$ (the initial solid phase which is
related to zero trading areas), and $\Gamma(0)$ (the initial
solid phase boundary). In addition, we shall present carefully the
financial interpretation of all these parameters.

\section{Sell/Buy orders and spreads from the limit order book}
\subsection{Preliminaries} In portfolio selection, the
investor uses financial data such as expected prices, rates of
return, market liquidity and many other; these parameters are
often estimated by historical data provided by the limit order
books of the assets of interest.

The evolution of market sell or buy limit orders for a particular
asset placed by investors in a financial market is described in
the limit order book, \cite{lob}. At any time $t \in[0, T]$, the
limit order book contains a list of sell and buy limit orders for
an asset, and it is continuously updated in $[0, T]$. The
information contained in an order book is significant for
discovering the price of an asset, and affects substantially the
investors' decisions on choosing optimal trading strategies.

A trading strategy (or limit order) is characterized by three
components: the time to place the order, the quantity of shares
that is for trade and the limit price per share.

In particular, a sell (buy) order is placed in the $i$ limit order
book (which is denoted by $LOB_i$), when an investor wants to sell
(buy) a specified number of shares of asset $i$ at or over
(below) a specified price; this price is called \textit{limit
price}.

Let $A_i(t)$ be the \textit{ask price} which is the lowest sell
order (i.e., the minimum price at which the investor is willing to
receive), and let $B_i(t)$ be the \textit{bid price} which is the
highest buy order (i.e., the maximum price at which the investor
is willing to pay), both contained in the order book. The ask
price is always higher than the bid price. Thus, a sell order that
arrives at time $t$ is executed, more specifically the asset is
sold, if the associated price being set by the investor is lower
than the current bid price at time $t$. Otherwise, the sell order
 is sorted in the list of the order book.

The average of the the ask and bid prices of the asset $i$ at time
$t$
\begin{equation}\label{m9}
\bar{p}_i(t):=\frac{A_i(t)+B_i(t)}{2},
\end{equation}
 is called
 \textit{mid price}, while the difference
\begin{equation}\label{m10}
spr_i(t):=A_i(t)-B_i(t),
\end{equation}
 between the ask and bid prices
at time $t$ defines the \textit{spread} for the order book of the
asset $i$.

\begin{remark}
The spread reflects the liquidity of the asset. Liquidity is a
measure that describes how quickly the asset is traded. For
example, a high liquidity asset is cash or currency, while a low
liquidity asset is art or real estate. An overview of indicators
that can be used to measure liquidity can be found in \cite{liq}.
The dependence of the spread of an asset with its liquidity
indicates an inverse relation: a wide spread implies a low liquidity asset.
\end{remark}

For a portfolio of $n$ different shares with prices
$p_i\in\mathbb{R}^+$, $i=1,\cdots,n$, let
$$x=(x_1,\cdots,x_n):=(\ln(p_1),\cdots,\ln(p_n))\in\mathbb{R}^n.$$

Let $w=w(x,t)$ (in accordance to the notation used in
\eqref{stef}) be the \textit{fluctuating density, cf.
\cite{main1}, or volume, cf. \cite{phd2}} of the \textit{limit
sell and buy orders} of all $n$ assets placed at price $x$ (i.e.,
corresponding to $p:=(p_1,\cdots,p_n)$ before the logarithmic
scaling).

Referring to the initial coordinates $p_i$, the zero trading
domain, contains all the possible prices in an $n$-dimensional
open rectangular domain $\mathcal{S}$ induced by the ask and bid
prices of each of the $n$ shares. More specifically, for all
prices $p_i$, $i=1,\cdots,n$, being lower than the respective ask
prices, and higher than the respective bid prices contained in
the limit order books at time $t$, no trading is possible. The
edges of the rectangle have lengths equal to the spreads
$spr_1(t),\cdots,spr_n(t)$, since for each $p(t) \in \mathcal{S}$
the coordinate $p_i(t)$ is within the interval $[B_i(t), A_i(t)]$
of the respective order book. Obviously, the mid price
$\bar{p}_i(t)$ of each asset is the midpoint of  $[B_i(t),
A_i(t)]$, and the `center' of $\mathcal{S}$ at the same time $t$
is given by the coordinates of the mid prices vector
$\bar{p}(t):=(\bar{p}_1(t),\cdots,\bar{p}_n(t)).$

Note that when more (interacting) markets are considered at the
same time $t$, the zero trading area consists of more than one
domains, defined by the corresponding spreads and mid prices for
the same shares taken from the limit order books of the different
markets.

At any price vector $p$ outside $\mathcal{S}$ there is a
possibility of trading (either sell or buy), and the volume of
the portfolio at this price indicates the total number of shares
that may be sold or bought. So, we may have one of the following
trading activities depending on the position of this $p$:
\begin{enumerate}
\item Sell opportunities for all or some assets.
\item Buy opportunities for all or some assets.
\item Sell opportunities for some assets and buy opportunities for other.
\end {enumerate}
Of course since the evaluation of spreads is observed in discrete
times, the aforementioned definition of zero trading areas is an
idealized one. In practice the boundary of $\mathcal{S}$, as
defined, will include all price vectors that are the most
favorable for obtaining the available shares of the assets, if the
respective sell/buy orders are executed. Roughly speaking, in
$\overline{\mathcal{S}}$ (i.e., the boundary included) the volume
of trading is minimized; we mention that on the boundary, the
prices optimize trading: i.e. for a sell order the
price on the boundary results in higher profits (though the probability of trade is decreased).

It is expected that the higher the distance of prices vector $p$
from the spreads area is, the higher the trading is. Note that
the rate of change of the trading volume of an asset may vary
significantly in the time interval $[0,T]$; for example, as
frequently observed, there is a decrease of trading during lunch
time. In addition, this rate may be influenced by the impact of
trading activities involving the same asset in other financial
markets, or when new information arrives about the asset.
\begin{remark}
In our approach, we shall consider one differential equation for
both sell and buy orders assuming that the demand and the supply
of the assets in the portfolio evolve according to a single
parameter $\alpha$ that is related to the total liquidity of the
markets.
\end{remark}

\subsection{Solid phase of spherical domains with varying radii
and constant centers} As already mentioned, $p(t)$ is the price
vector of the portfolio at time $t$. Remind that the portfolio
consists of $n$ different shares.

We apply the change of variables $$x=(\ln(p_i),\cdots,\ln(p_n)),$$
for the new coordinate system in space. At time $t$ the solution
of the Stefan problem defined in \eqref{stef} provides the total
density $w(x,t)$ and the solid phase $\mathcal{D}(t)$ at which
$w=0$.

Motivated by the model described by Niethammer in \cite{nietthes}
for zero volatility, where a union of spherical domains
constitutes the initial solid phase, while these domains remain
spherical, we approximate the initial solid phase
 at time $t=0$ with a spherical one centered at the
 rescaled coordinates of the initial mid price vector
 \begin{equation}\label{exmp}
 \bar{p}(0)=\Big{(}\frac{A_1(0)+B_1(0)}{2},\cdots,\frac{A_n(0)+B_n(0)}{2}\Big{)}=:(\bar{p}_1(0),\cdots,\bar{p}_n(0)),
\end{equation}
  i.e. for $I=1$ the center is given by
 \begin{equation}\label{exc}
x_c=(x_{c1},\cdots,x_{cn}):=(\ln(\bar{p}_1(0)),\cdots,\ln(\bar{p}_n(0))).
\end{equation}
The analogous approach can be applied for $I\geq 1$, by using the
data of the limit order books of each $i=1,\cdots,I$ market.

The radius of this initial spherical solid phase $\mathcal{D}(0)$
is defined by
\begin{equation}\label{rad}
R(0):= \min_{i=1,\cdots,n}{\frac{lspr_i}{2}},
\end{equation}
for
\begin{equation}\label{lsp}
lspr_i:=\ln(A_i(0))-\ln(B_i(0)).
\end{equation}

 So, we exclude the larger spread values; this is a reasonable
strategy when in our portfolio assets of analogous spreads are
considered as for example currencies. Moreover, for small spreads
of order $\mathcal{O}(10^{-1})$ - $\mathcal{O}(10^{-4})$, which
is the usual case for assets of high liquidity, even in the
logarithmic scale, the minimum spread enclosed area is a good
approximation of the solid phase; see for example the following
data for the British Pound versus US Dollar currency taken in
March 2019, \cite{ad}, 19 March 2019 British Pound v US Dollar
Data Latest GBP/USD: Exchange Rate: 1.3275, \textit{Bid: 1.3275,
Ask:  1.3276}, Market Status: Live, Percent Change: +0.0939,
Today's Open (00:01 GMT): 1.3262, Today's High: 1.3309, Today's
Low:  1.3241, Previous day's Close (23:59 GMT): 1.3263, Current
Week High: 1.3309, Current Week Low: 1.31841, Current Month
High:  1.33786.

The center of the spherical domain represents the mid vectorial
price of the $n$ assets and the radius represents the range of
the minimum spread of all $n$ assets around the mid price.

We also assume that the center of the spherical domain remains
constant in time, that is the mid price of each order book does
not change in a small time horizon (e.g. within a day).

We can extend our model of one financial market to the scenario
where the investor is interested in taking part in more than one
markets for the same portfolio. This is translated to considering
more than one domains of different radii $R_i(t)$, one for each
financial market, and as solid phase the union of them.
 When significantly different mid prices per market and relatively
small spreads occur, the initial spherical domains can be assumed
well separated and placed far enough one from the other so that
during evolution they do not touch. This is in accordance to the
necessary assumption for the deterministic Stefan problem model
of \cite{nietthes} where the theory predicts the increase of the
larger spherical domain at the expense of the smaller, at least
when the volatility is zero. In such a case, the coefficient
$\alpha$ of the SPDE of \eqref{stef2} will be related to the
total liquidity of the different markets.

In the above, our aim was to approximate the initial solid phase
by a spherical domain of diameter the minimum spread value that
theoretically can be taken at the specific initial time $t_0$
from the limit order books. However, in realistic cases, the
provided data are discrete. We define instead an average value of
historical data very close to the initial time $t$. This is
implemented by using the data of the following Definition
\ref{mdef}.
\begin{definition}\label{mdef}
Let $t_1, \cdots,t_m$ be $m$ time instants prior to time interval
$[t_0,t_0+T]$ (where $t_0$ is the initial time). We will use the
following data for each order book $LOB_i,\;i =1,\cdots,n$ at
each time $t_j,\; j=1,\cdots,t_m$: the ask price $A_i(t_j)$, the
bid price $B_i(t_j)$, the spread $spr_i(t_j)= A_i(t_j)- B_i(t_j)$
and the total volume $w_i$ of sell orders for each asset $i$ that
have been executed additively during the $m$ time instants.
\end{definition}

More precisely, in order to estimate the mid price and the spreads
of the $n$ assets at $t_0$ (which will be set to $0$), we define
the average version of \eqref{exmp}, where an average initial mid
price vector is used, given by
\begin{equation}\label{exmpa}
\bar{p}_a:=\Big{(}\displaystyle{\sum_{j=1}^{m}}\frac{A_1(t_j)+B_1(t_j)}{2m},
\cdots,\displaystyle{\sum_{j=1}^{m}}\frac{A_n(t_j)+B_n(t_j)}{2m}\Big{)}=:(\bar{p}_{a1},\cdots,\bar{p}_{an}).
\end{equation}
This will define the center of the solid phase at $t_0:=0$ by the
logarithm of its coordinates, i.e., the definition \eqref{exc} is
replaced by
 \begin{equation}\label{exca}
x_c:=(\ln(\bar{p}_{a1}),\cdots,\ln(\bar{p}_{an})).
\end{equation}
In this case the initial radius at $t_0:=0$ is given by the
following averaged version of \eqref{rad}
\begin{equation}\label{rada}
R(0):= \min_{i=1,\cdots,n}{\frac{lspra_i}{2}},
\end{equation}
for
\begin{equation}\label{lspa}
\begin{split}
lspra_i:=&\ln\Big{(}\displaystyle{\sum_{j=1}^{m}}A_i(t_j)/m\Big{)}-\ln\Big{(}\displaystyle{\sum_{j=1}^{m}}B_i(t_j)/m\Big{)}\\
=&\ln\Big{(}\displaystyle{\sum_{j=1}^{m}}A_i(t_j)\Big{)}-\ln\Big{(}\displaystyle{\sum_{j=1}^{m}}B_i(t_j)\Big{)}.
\end{split}
\end{equation}

We also assume that the fixed cost of liquidation per share is
equal to a fixed rate of general transaction costs ($H=K$ for the
physical problem, which appears in the Stefan condition for the
velocity).

\subsection{The liquidity coefficient $\alpha$}
Estimations of various characteristics of the market, such as the
level of liquidity, the assets evaluation, and the volume of
trading activity per temporal period (for example during a
financial year) consist the context of the limit order book as we
discussed in the previous section; see for example the relevant
survey presented in \cite{surv}. The limit order represents the
trade of a specified amount of an asset at a predetermined price.

The Laplacian coefficient $\alpha>0$ of the SPDE of Stefan problem
\eqref{stef} (appearing also in \eqref{stef2}), measures the
diffusion strength of sell and buy orders during trading; remind
that for the financial application considered in this section,
the total trading is observed, and so, in liquidation sell and as
well buy orders participate.

A large value $\alpha>>0$ models a high-volume market with intense
trading activity.

We shall assume that $\alpha$ is constant in a short period of
one day, this implying that the tendency of the demand of the
market concerning the assets of interest will not change its
pattern. Since $\alpha$ reflects the liquidity of the market, we
expect that it will increase as the number of the shares of the
assets that are traded (sold and/or bought) in the recent past is
increased. Also a low spread shows a tendency of the market to
face a high liquidity.

Taking these remarks into account we define $\alpha$ to be a
weighted average of the liquidity measures of each asset
separately which will be denoted by the symbol $\alpha_i$.

We shall use the following historical data from the order book of
each of $n$ shares of Definition \ref{mdef}.

Let
\begin{equation}\label{initial3}
w_{\rm tot}:=\sum_{i=1}^{n}{w_i},
\end{equation}
be the total number of sell and buy orders for all assets that
have been executed during the $m$ time instants prior to
$[t_0,t_0+T]$ (we will set $t_0:=0$).

Let also
\begin{equation}\label{asprd}
\bar{spr}_i:=\displaystyle{\sum_{j=1}^m}\frac{A_i(t_j)-
B_i(t_j)}{m},
\end{equation}
be the average spread of $i$ share; we define
\begin{equation}\label{initial4}
\alpha_i=\frac{w_i}{\bar{spr}_i},
\end{equation}
as the measure of liquidation of asset $i$.

The liquidity coefficient $\alpha_{\rm in}$ is defined then as
follows
\begin{equation}\label{initial5}
\alpha_{\rm in}:=\sum_{i=1}^{n}{a_i\frac{w_i}{w_{\rm
tot}}}=\sum_{i=1}^{n}{\frac{w_i^2}{\bar{spr}_iw_{\rm tot}}}.
\end{equation}

However, since we will use a logarithmic scale for the space
variables, we shall define in the rescaled problem \eqref{stef}
or \eqref{stef2} $\alpha$ as follows
\begin{equation}\label{initial5*}
\alpha:=\sum_{i=1}^{n}\frac{w_i^2}{(lspra_i)w_{\rm tot}}.
\end{equation}

\begin{remark}
The formulae \eqref{initial3}-\eqref{initial5*} were implemented
for the case of one market participating and so for only one ball
for the initial solid phase of zero trading. In the general case
of $I$ balls (see \eqref{stef2}), we apply the same formulae for
the limit order books of each $i=1,\cdots,I$ market for the same
$n$ assets, and compute each respective liquidity coefficient;
let this be denoted by $\alpha^i$. We may then define as $\alpha$
the average, i.e.
$$\alpha:=\frac{1}{I}\displaystyle{\sum_{i=1}^I}\alpha^i.$$
\end{remark}

\subsubsection{An example}\label{finex} We consider a portfolio of three
assets ($n=3$) and historical data for $m=5$ time instances.
Tables \ref{tab:1}, \ref{tab:2}, \ref{tab:3} show the data of the
respective order books.
\begin{table} [ht]
\caption{A sample of 5 quotes for asset 1}
\label{tab:1}
\begin{tabular}{lcccc}
\hline\noalign{\smallskip}
Time $t_j$& $A_1(t_j)$ & $B_1(t_j)$ & $spr_1(t_j)$ & $\frac{A_1(t_j)+B_1(t_j)}{2}$ \\
\noalign{\smallskip}\hline\noalign{\smallskip}
9:00& 30.25&29.75 & 0.5&30 \\
9:02& 30.75&29.50&1.25& 30.125 \\
9:04& 31.00&29.25&1.75&30.125  \\
9:06& 31.50&29.00 & 2.50&30.25\\
9:08& 35.00&28.75 &6.25&31.875\\
\noalign{\smallskip}\hline\noalign{\smallskip}
Sum&158.5&146.25&12.25 &152.375\\
$\bar{spr}_1$&&$12.25/5=2.45$\\
$lspra_1$&&$\ln(158.5)-\ln(146.25)=0.080437107$\\
$x_{c1}$&& $\ln(152.375/5)=3.416906675$\\

 \noalign{\smallskip}\hline
\end{tabular}
\end{table}
\begin{table} [ht]
\caption{A sample of 5 quotes for asset 2}
\label{tab:2}
\begin{tabular}{lcccc}
\hline\noalign{\smallskip}
Time $t_j$& $A_2(t_j)$ & $B_2(t_j)$ & $spr_2(t_j)$ & $\frac{A_2(t_j)+B_2(t_j)}{2}$ \\
\noalign{\smallskip}\hline\noalign{\smallskip}
9:00& 15.00&14.25 &0.75 &14.625 \\
9:02& 15.25&14.25 &1.00 &14.75 \\
9:04& 15.25& 15.00&0.25 &15.125 \\
9:06& 15.50&15.25 &0.25 &15.375\\
9:08& 15.75& 15.50&0.25 &15.625\\
\noalign{\smallskip}\hline\noalign{\smallskip}
Sum&76.75&74.25&2.50 &75.50\\
$\bar{spr}_2$&&$2.50/5=0.5$\\
$lspra_2$&&$\ln(76.75)-\ln(74.25)=0.033115609$\\
$x_{c2}$&& $\ln(75.50/5)=2.714694744$\\
\noalign{\smallskip}\hline
\end{tabular}
\end{table}
\begin{table} [ht]
\caption{A sample of 5 quotes for asset 3}
\label{tab:3}
\begin{tabular}{lcccc}
\hline\noalign{\smallskip}
Time $t_j$& $A_3(t_j)$ & $B_3(t_j)$ & $spr_3(t_j)$ & $\frac{A_3(t_j)+B_3(t_j)}{2}$ \\
\noalign{\smallskip}\hline\noalign{\smallskip}
9:00& 20.75&19.50 &1.25 &20.125 \\
9:02&21.00 &19.50 &1.50 &20.25 \\
9:04& 21.25&19.25 &2.00 &20.25  \\
9:06& 22.00&18.25 &3.75 &20.125 \\
9:08& 25.50& 18.50&7.00 &22.00\\
\noalign{\smallskip}\hline\noalign{\smallskip}
Sum&110.5&95&15.50 & 102.75\\
$\bar{spr}_3$&&$15.50/5=3.1$\\
$lspra_3$&&$\ln(110.5)-\ln(95)=0.151138629$\\
$x_{c3}$&& $\ln(102.75/5)=3.022860941$\\
\noalign{\smallskip}\hline
\end{tabular}
\end{table}
Table \ref{tab:4} shows the number of shares of the assets sold
and bought in the $m=5$  periods,
\begin{table} [ht]
\caption{Number of shares sold, and liquidity coefficient}
\label{tab:4}
\begin{tabular}{lcccc}
\hline\noalign{\smallskip}
Asset & $w_i$ &$a_i=w_i/\bar{spr}_i$ &$w_i/w_{\rm tot}$ &$a_i w_i/w_{\rm tot}$\\
\noalign{\smallskip}\hline\noalign{\smallskip}
1& 550 &550/2.45=224.4897959&550/1600=0.34375&77.16836735\\
2& 750 &750/0.5=1500&750/1600=0.46875&703.125\\
3& 300 &300/3.1=96.77419355&300/1600=0.1875&18.14516129\\
\noalign{\smallskip}\hline\noalign{\smallskip}
Sum& 1600& & &$\alpha_{\rm in}=798.4385286$\\
\noalign{\smallskip}\hline
\end{tabular}
\end{table}
or in the logarithmic scale, for defining $\alpha$ we use the
Table \ref{tab:5}.
\begin{table} [ht]
\caption{Number of shares sold, and liquidity coefficient in
logarithmic scale} \label{tab:5}
\begin{tabular}{lcccc}
\hline\noalign{\smallskip}
Asset & $w_i$ &$w_i/lspr_i$ &$w_i/w_{\rm tot}$ &$( w_i/lspra_i)(w_i/w_{\rm tot})$\\
\noalign{\smallskip}\hline\noalign{\smallskip}
1& 550 &550/0.080437107=6837.640227&550/1600=0.34375&2350.438828\\
2& 750 &750/0.033115609=22647.93031&750/1600=0.46875&10616.21733\\
3& 300 &300/0.151138629=1984.932649&300/1600=0.1875&372.1748718\\
\noalign{\smallskip}\hline\noalign{\smallskip}
Sum& 1600& & &$\alpha=13338.83103$\\
\noalign{\smallskip}\hline
\end{tabular}
\end{table}
Based on the above data, we derive the following parameters: the
initial center of the spherical domain at time $0$ is given by
\begin{equation}\label{nex}
x_c=(x_{c1},x_{c2},x_{c3})=(3.416906675,2.714694744,3.022860941),
\end{equation}
the radius of the spherical domain at time $0$ by
\begin{equation}\label{ner}
R(0)=\frac{1}{2}\min\{0.080437107,0.033115609,0.151138629\}=0.016557805,
\end{equation}
and the coefficient $\alpha_{\rm in}$ is
 $$ \alpha_{\rm in}=798.4385286,$$
 while at the logarithmic scale
 \begin{equation}\label{nea}
 \alpha=13338.83103.
 \end{equation}

\begin{remark}
At a next section we will use this computed value of $\alpha$,
given by \eqref{nea}, and the specific data presented as above
(together with the values in \eqref{nex}, \eqref{ner}) in a
simulation where the stochastic Stefan problem \eqref{stef2} with
one initial ball in the zero trading area will be solved
numerically for the corresponding time of $8$ minutes in the
financial day; the data used refer to the number of shares traded
(sold and bought) in $8$ minutes and the liquidity coefficient
numerator uses this number which is highly increasing during the
day, while the denominator involves the spread that tends to be
less varying.
\end{remark}
\section{Asymptotic expansions and approximating dynamics in dimensions $n=3$}
\subsection{Preliminaries}
The deterministic version of the general stochastic Stefan problem
\eqref{stef} in the union of balls solid phase statement
\eqref{stef2}, i.e. when $n=3$ and $\sigma=0$, has been fully
analyzed in \cite{nietthes}. Our aim is to derive through
asymptotic expansions the approximating dynamics of the moving
boundary of \eqref{stef2} in the presence of noise (stochastic
volatility) as a system of stochastic differential equations.
This will provide a useful tool for the prediction of the spreads
of $3$ shares participating in $I$ markets since the system can
be solved numerically. In particular, for various cases of
financial interest we will present the numerical results of a
number of simulations. We note that the analysis here is
restricted to $n=3$ (as in \cite{nietthes}) but can be easily
extended for $n\neq 3$, once careful calculations are applied in
the derivation of the statement of \eqref{stef2} in dimensions
$n\neq 3$; the surface area of a ball is present at the Stefan
condition and will involve $n$, while the $n$-dependent euclidean
norm in $\mathbb{R}^n$ will appear and may modify many other
formulae.

\subsection{Zero volatility}
First we analyze briefly the known results of \cite{nietthes} in
the absence of noise, and then we solve the approximating ODEs
system numerically; this numerical part appears for first time in
the literature.

We present first some existing results for the problem
\eqref{stef2}, for $\sigma=0$.

The deterministic Stefan problem \eqref{stef2}, takes the form
\begin{equation}\label{stef3}
\begin{split}
\alpha^{-1}\partial_t v=&\Delta
v,\;\;x\in\mathbb{R}^3-\mathcal{D}(t)\;\;{\rm('liquid'\;
phase)},\;\;t>0,\\
v=&0,\;\;x\in\mathcal{D}(t)\;\;{\rm('solid'\;phase)},\\
v=&\frac{1}{R_i(t)}\;\;{\rm on}\;\;\Gamma_i(t),\\
\dot{R}_i(t)=&\frac{1}{4\pi R_i^2(t)}\int_{\Gamma_i(t)}\nabla v\cdot \eta,\\
\Gamma(0)&=\Gamma_0,\\
v_\infty(t)&=\displaystyle{\lim_{r\rightarrow\infty} v(r,t)},
\end{split}
\end{equation}
where the initial $\Gamma_0$ is given.

The so-called mean-field variable $v_\infty$ describes the
limiting behaviour of the density $v$ away from the phase
transitions interfaces. A scaling on the space variables of the
form $x\in[0,1]\rightarrow\delta^{-4} x\in[0,\delta^{-4}]$, where
$\alpha^{-1}=\delta^{9}$, cf. \cite{nietthes,JDE12}, approximates
as $\alpha\rightarrow \infty$, the background domain
$\mathbb{R}^3$ of the moving boundary problem by some domain
$\Omega$ of very large volume
$|\Omega|=(\delta^{-4})^3=\delta^{-12}=\alpha^{4/3}.$ The
assumption
$${\rm Vol}(\rm solid\;phase)<<{\rm Vol}(\Omega)=:|\Omega|,$$ leads to
$$\displaystyle{\sum_{i=1}^I} R_i(0)\leq I \displaystyle{\max_i}
R_i(0)<<{\rm
diameter}(\Omega)=\mathcal{O}(\delta^{-4})=\mathcal{O}(\alpha^{4/9}).$$
So, we impose the next condition for our initial data
\begin{equation}\label{volfr}
I\displaystyle{\max_{i=1,\cdots,I}}R_i(0)<<\mathcal{O}(\alpha^{4/9}),
\end{equation}
which is a condition for the proper scaling of the problem
\eqref{stef2}.

In our approach we will not use a $\delta$ for rescaling the
equation (as done in \cite{nietthes,JDE12}, where $\delta<<1$ is
used also in relation with a very large number of radii, in a
macroscopic level, not needed here), but we will consider instead
the $\alpha$-dependent Stefan problem for $\alpha$ and $R_i(0)$
satisfying \eqref{volfr}.

It is known that as $\alpha^{-1}\rightarrow 0$, the exact solution
$$v_\infty(t)+\sum_i\frac{1-R_i(t)v_\infty(t)}{|x-x_c^i|},$$ of the
quasi-static elliptic problem (replace $0$ at the left-hand side
of the pde of \eqref{stef3}) for $x_c^i\in\mathbb{R}^3$ the
center of the ball $B_{R_i}$, approximates the solution $v$ of
\eqref{stef3}; see also the comments at pg. 4683 of \cite{JDE12}.

For $\alpha>0$ very large, which describes here a strong diffusion
of the sell/buy orders, $v_{\infty}$ satisfies approximately the
i.v.p.
\begin{equation}\label{1.5*}
\partial_t v_{\infty}(t)=4\pi\alpha^{-1/3}\displaystyle{\sum_{i\in
I}}(1-R_i(t)v_\infty(t)),\;\;\;\;\;v_\infty(0)=v_{\infty 0},
\end{equation}
for example for $v_{\infty
0}\approx\frac{I}{\displaystyle{\sum_{i\in I}} R_i(0)}$. Here,
$\alpha^{-1/3}$ corresponds to
$$\frac{1}{\alpha^{-1}|\Omega|}=\frac{1}{\alpha^{-1}a^{4/3}}=\alpha^{-1/3},$$
see at pg. 4684 of \cite{JDE12}, and in the sequel of this
section.

Also, in the weak sense, an approximating formula for the
dynamics of the radii as $\alpha\rightarrow \infty$ is given by
\begin{equation}\label{1.6}
\dot{R}_i(t)= \frac{v_{\infty}(t)}{R_i(t)}-\frac{1}{R_i^2(t)},
\end{equation}
for $v_\infty(t)$ the solution of \eqref{1.5*}. See for example
the approximation estimate in $W^{1,1}(0,T)$ of \cite{nietthes}
at pg. 175 of \cite{nietthes} (for
$\|z\|_{W^{1,1}(0,T)}:=\int_0^T (|z|+|z_t|)dt$), or at pg. 4712
of \cite{JDE12} for $\beta=g_i:=0$ in the formula (87) therein,
derived for the rescaled problem.

Also, the density solution $v$ is approximated by the
quasi-static one
\begin{equation}\label{apo}
v(x,t)\approx
v_\infty(t)+\sum_i\frac{1-R_i(t)v_\infty(t)}{|x-x_c^i|}.
\end{equation}

The ODEs system \eqref{1.5*}, \eqref{1.6} for the dynamics of the
$I$ radii consists of $I+1$ equations with unknowns
$$v_{\infty}(t),\;R_1(t),\;R_2(t),\cdots,R_I(t),$$
and initial values
\begin{equation}\label{iva}
v_{\infty}(0):=I\Big{(}\displaystyle{\sum_{i=1}^I}R_i(0)\Big{)}^{-1},\;R_1(0),\;R_2(0),\cdots,R_I(0).
\end{equation}
We rewrite the system in the equivalent form
\begin{equation}\label{syst1}
\begin{split}
&\partial_t
v_{\infty}(t)=4\pi\alpha^{-1/3}\displaystyle{\sum_{i\in
I}}(1-R_i(t)v_\infty(t)),\;\;\;\;\;v_\infty(0)=I\Big{(}\displaystyle{\sum_{i=1}^I}R_i(0)\Big{)}^{-1},\\
&\partial_t(R_i^3)(t)=
3v_{\infty}(t)R_i(t)-3,\;\;\;\;\;R_i(0):=R_{i0},\;\;i=1,\cdots,I,
\end{split}
\end{equation}
or by setting
 $$z_i(t):=R_i^3(t),$$ we obtain the equivalent system
\begin{equation}\label{ssss0}
\begin{split}
&\partial_t
v_{\infty}(t)=4\pi\alpha^{-1/3}\displaystyle{\sum_{i\in
I}}(1-z_i(t)^{1/3}v_\infty(t)),\;\;\;\;\;v_\infty(0)=I\Big{(}\displaystyle{\sum_{i=1}^I}R_i(0)\Big{)}^{-1},\\
&\partial_t z_i(t)=
3v_{\infty}(t)z_i(t)^{1/3}-3,\;\;\;\;\;z_i(0):=R_{i0}^{3},\;\;i=1,\cdots,I.\\
\end{split}
\end{equation}
Here, each equation for $z_i$ holds until the vanishing time of
 the $i$ ball.

The solution $z_i(t)$, $v_\infty(t)$ of the above
 system of ODEs is then used to specify $R_i(t)$ and $v(x,t)$ by
 \begin{equation}\label{ssss1}
 \begin{split}
&R_i(t)=z_i(t)^{1/3},\;\;i=1,\cdots,I,\\
&v(x,t)=v_\infty(t)+\sum_i\frac{1-R_i(t)v_\infty(t)}{|x-x_c^i|},\;\;i=1,\cdots,I,
\end{split}
\end{equation}
where for the equation for $v$ we used the approximation formula
given by \eqref{apo}.
\begin{remark}
The system \eqref{ssss0} will be solved numerically, and its
solution will be used in the direct formulae \eqref{ssss1}. We
propose \eqref{ssss0}, \eqref{ssss1} (and their numerical
solution) as a financial tool for the estimation at time $t$ of
the spreads $2R_i(t)$ and the density $v(x,t)$ of the Stefan
problem \eqref{stef2} when $\sigma=0$.
\end{remark}

\subsubsection{Numerical experiments}
We constructed a double precision Matlab code for the numerical
solution of the system \eqref{ssss0}, \eqref{ssss1}; there, we
used the ODE45 routine.

We applied our code for a number of numerical experiments with
initial data satisfying the proper scaling condition
\eqref{volfr}.
\begin{enumerate}
\item \underline{$4$ radii}:

For the first experiment, we took $I:=4$ balls for the initial
solid phase, and $R_i(0):=1,\;2,\;3,\;8$, and $\alpha=10000$. For
the graphs of the radii as functions of $t$, and their vanishing
times at the horizontal $t$- axis, see Fig. \ref{fig1}.
Obviously, the expected dominance of the larger ball at the
expense of the smaller ones is observed.
\begin{figure}[h]
\includegraphics[width=8cm]{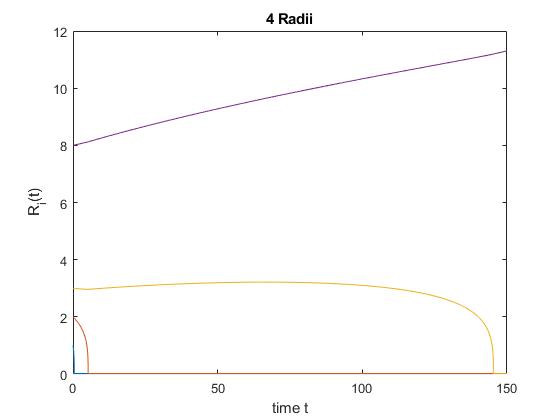}
\caption{Radii dynamics of $4$ balls at the solid
phase.}\label{fig1}
\end{figure}
\item \underline{$100$ radii}:

We checked our code for a very large number $I=100$ of initial
balls, with centers a small perturbation of $x_{\rm
intr}\in\mathbb{R}^3$ where an intrinsic value $\|x_{\rm
intr}\|=15$ is assigned. The initial radii are defined in a
comparative way through $x_{\rm intr}$ by $R_i(0):=\delta\% v_i,$
for $v_i:=\|p_i x_{\rm intr}\|$, $p_i\in[0,1]$ (randomly
evaluated), and $\delta:=25$. We took $\alpha=10000000$. In this
run the initial data are given by a random perturbation of a
historical values set of data; here, $I=100$ does not represent
$I$ different markets (as in the main financial application we
presented) and $R_i$ are not related to spreads. The next figure,
Fig. \ref{fig2} presents the evolution of the radii.
\begin{figure}[h]
\includegraphics[width=8cm]{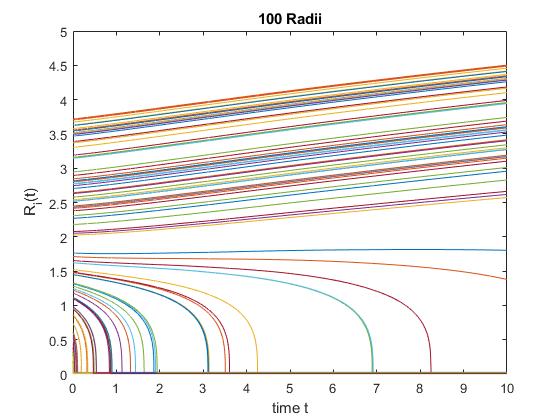}
\caption{Radii dynamics of $100$ balls at the solid
phase.}\label{fig2}
\end{figure}

\item \underline{$2$ radii}:

We took $\alpha=1000$, $R_1(0):=2.5,\;R_2(0):=1.5.$ We present
the dynamics of the $2$ radii at the next figure, Fig. \ref{fig3}.
\begin{figure}[h]
\includegraphics[width=8cm]{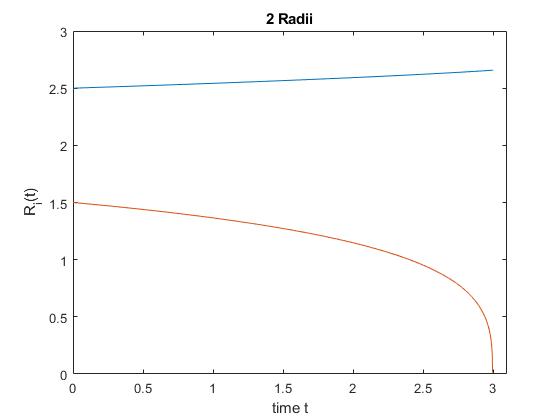}
\caption{Radii dynamics of $2$ balls at the solid
phase.}\label{fig3}
\end{figure}
\item \underline{$1$ ball at the solid phase with very large radius (large
spread case)}:

We took one sphere of center $x_1=(3/1000,7/1000,15/1000)$ and
initial radius $$R(0)=\|x_1\|250/100=4.205650960315179e-01,$$ and
defined $\alpha=100$; this case exceeds severely a normal
percentage between the spread and the value of the asset measured
by $\|x_1\|$.  Recall that in the financial application analyzed
in the previous sections the diameter $2R(0)$ stands as a measure
of the minimum spread of the $3$ shares at the initial time. Our
run demonstrated a sudden drop of the radius, see Figure
\ref{fig4}. Here, we remind that the scaling of initial data
satisfied \eqref{volfr}. However, one ball is a static solution
and theoretically it is expected its radius to change very
slowly, as seen at the next experiment.
\begin{figure}[h]
\includegraphics[width=8cm]{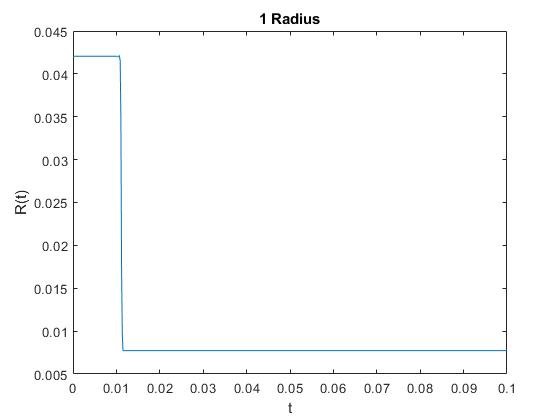}
\caption{Radius dynamics of one ball at the solid phase with
relatively large spread.} \label{fig4}
\end{figure}

\item \underline{$1$ ball at the solid phase with small radius (small
spread case)}:

Finally, we took
$$R(0):=\|x_1\|25/100=4.205650960315179e-02,$$ (more normal
range between value and spread), and we kept the same other data
as in the previous experiment; we derived numerically the
expected quasi-static solution approximate profile $R(t)\approx
R(0)$ for all $t$ (remind that one ball is an equlibrium of the
quasistatic case), see Figure \ref{fig5}.
\begin{figure}[h]
\includegraphics[width=8cm]{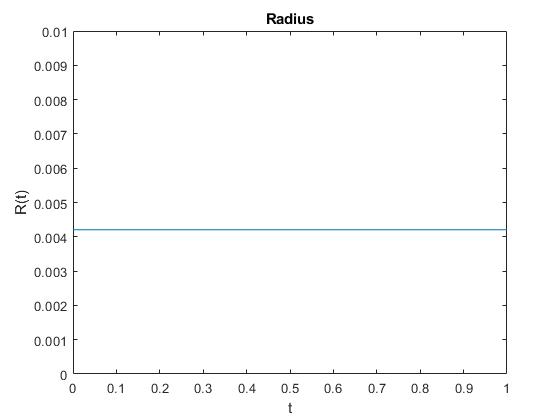}
\caption{Radius dynamics of one ball at the solid phase with
relatively small spread.} \label{fig5}
\end{figure}
\end{enumerate}
\subsection{Formal asymptotics for the stochastic Stefan problem with time noise} We proceed to the formal calculations analogous
to those presented in \cite{JDE12} (by defining the parameters
$\beta$, $g_i$ of the Stefan problem of \cite{JDE12} as
$\beta=g_i=0$) and additionally, we insert the extra noise term
in the parabolic equation.

First we present the result of Lemma \ref{itocor} for the formula
of differentiation in time of integrals on domains of stochastic
time dependent spherical boundary; its proof involves integrals
defined on stochastic on time spherical surfaces embedded in
$\mathbb{R}^3$ (case of stochastic radius); \eqref{formito} there
is used in the sequel for the second order asymptotics of the
problem's stochastic dynamics, in case of time noise given as the
formal derivative of a Wiener process; see at the Appendix for
the analytical proof, and for the version where
$\partial_t\int_{B(R(t))}u(x,t)dx$ is computed (Lemma
\ref{itocor2}) that we included for completeness of the text.

Let $x\in\mathbb{R}^3,\;t\in\mathbb{R}$, and $u=u(x,t)$, $R=R(t)$,
be real stochastic processes compatible with It\^o calculus in
time, and let $u$ be smooth in space. If $B_{R(t)}=:B(R(t))$ is a
ball in $\mathbb{R}^3$ of radius $R(t)$, the \eqref{formito}
holds i.e.
\begin{equation*}\label{formito}
\begin{split}
\partial_t\int_{\mathbb{R}^3-B(R(t))}u(x,t)dx=&\int_{\mathbb{R}^3-B(R(t))}u_t(x,t)dx
-\dot{R}(t)\Big{[}1+\frac{\dot{R}(t)}{R(t)}\Big{]}\int_{\partial
B(R(t))}u(s,t)ds\\
&-\frac{(\dot{R}(t))^2}{2}\int_{\partial B(R(t))}\nabla
u(s,t)\cdot \eta(s) ds-\dot{R}(t)\int_{\partial B(R(t))}u_t(s,t)
ds,
\end{split}
\end{equation*}
if the appearing integrals are well defined. Here,
$\dot{R}:=R_t=dR(t)$, and $\eta$ is the outward normal vector to
$\partial B(R)$.

\begin{remark}
In the deterministic case, due to the usual chain rule, in
dimensions $n=3$, and for general $u$, the result of Lemma
\ref{itocor2} takes the form
\begin{equation}\label{formito2d}
\begin{split}
\partial_t\int_{B(R(t))}u(x,t)dx=&\int_{B(R(t))}u_t(x,t)dx
+\dot{R}(t)\int_{\partial B(R(t))}u(s,t)ds,
\end{split}
\end{equation}
which is a well known formula.

Moreover in the deterministic case again, the result of Lemma
\eqref{itocor} takes the form
\begin{equation}\label{formitod}
\begin{split}
\partial_t\int_{\mathbb{R}^3-B(R(t))}u(x,t)dx=&\int_{\mathbb{R}^3-B(R(t))}u_t(x,t)dx
-\dot{R}(t)\int_{\partial B(R(t))}u(s,t)ds,
\end{split}
\end{equation}
for $\dot{R}:=R_t$.
\end{remark}

Let us consider the problem \eqref{stef2} posed in
$\mathbb{R}^3$, with non-smooth noise $\dot{W}:=\dot{W}(t)$,
depending only on time given as the formal derivative of a time
dependent one dimensional, one parameter Wiener process (for
example $W(t)=\beta(t)$ a brownian process). This problem for one
only (open) ball $B_R$ with radius $R$, has the following
statement
\begin{equation}\label{stef2one}
\begin{split}
\alpha^{-1}\partial_t v=&\Delta v +\alpha^{-1}\sigma({\rm
dist}(x,\partial R(t)))\dot{W}(t),\;\;x\in\mathbb{R}^3-B_R
(t),\;\;t>0,\\
v=&0,\;\;x\in B_R(t),\\
v=&\frac{1}{R(t)}\;\;{\rm on}\;\;\partial B_R(t),\\
\dot{R}(t)&=\frac{1}{4\pi R^2(t)}\int_{\partial B_R(t)}\nabla
v\cdot \eta,
\end{split}
\end{equation}
and $\displaystyle{\lim_{r\rightarrow
\infty}}v(r,t)=v_\infty(t),$ for $r$ the distance of $x\in
\mathbb{R}^3$ from the origin.

The formal construction of an approximate solution for the
multiple spheres problem \eqref{stef2}, is based on the following
argument. Near one of the spherical domains of the solid phase of
\eqref{stef2}, the solution of \eqref{stef2}, should look
approximately like the solution of the single spherical domain
solid phase problem \eqref{stef2one}; see the analogous argument
in \cite{JDE12}.

The quasi-static version of \eqref{stef2one} is given as
$\alpha^{-1}\rightarrow 0$ by
\begin{equation}\label{qstef2}
\begin{split}
\Delta v =&0,\;\;x\in\mathbb{R}^3-B_R
(t),\;\;t>0,\\
v=&0,\;\;x\in B_R(t),\\
v=&\frac{1}{R(t)}\;\;{\rm on}\;\;\partial B_R(t),\\
\dot{R}(t)&=\frac{1}{4\pi R^2(t)}\int_{\partial B_R(t)}\nabla
v\cdot \eta,
\end{split}
\end{equation}
with $\displaystyle{\lim_{r\rightarrow
\infty}}v(r,t)=v_\infty(t)$;
 observe that $\alpha^{-1}$ acts also to the noise term of \eqref{stef2one},
 which thus, vanishes in the quasi-static case.

The exact solution of \eqref{qstef2} is given by
\begin{equation}\label{ex1}
v(r,t)=v_\infty(t)+\frac{1-R(t)v_\infty(t)}{r},
\end{equation}
and
\begin{equation}\label{ex2}
\dot{R}(t)=\frac{v_\infty(t)}{R(t)}-\frac{1}{R^2(t)}.
\end{equation}

Since $\alpha^{-1}<<1$, the solution $v(x,t)$ of \eqref{stef2} for
time noise $\dot{W}:=\dot{W}(t)$, is approximated by a linear
combination of individual (single sphere) solutions of the
quasi-static problem \eqref{qstef2}, as follows
\begin{equation}\label{apsol1}
v(x,t)\approx
v_\infty(t)+\sum_i\frac{1-R_i(t)v_\infty(t)}{|x-x_c^i|},
\end{equation}
for $x_c^i$ the center of the ball $B_{R_i}$ with radius $R_i$.

As $\alpha^{-1}\rightarrow 0$ the background domain $\mathbb{R}^3$
of the moving boundary problem is approximated by some domain
$\Omega$ of very large volume $|\Omega|=\alpha^{4/3}$. Moreover,
the liquid phase $\mathbb{R}^3-\mathcal{D}$ is very close to
$\Omega$.

Hence, we consider that
$$|\Omega|v_\infty=\int_{\Omega}v_\infty dx\approx \int_{\mathbb{R}^3-\mathcal{D}}vdx,$$
which yields that
$$|\Omega|v_\infty\approx \int_{\mathbb{R}^3-\mathcal{D}}vdx.$$
We integrate in the liquid phase both sides of the stochastic
equation of \eqref{stef2}, use the above approximation, and the
b.c. of \eqref{stef2}, and derive for
$$m(t):=\displaystyle{\sum_i}\frac{(dR_i)^2}{2}\int_{\partial
B_{R_i}}\nabla v\cdot \eta
ds+\displaystyle{\sum_i}dR_i\int_{\partial B_{R_i}} v_t
ds=2\pi\displaystyle{\sum_i}(dR_i)^3R_i^2-4\pi
\sum_iR_i\dot{R_i}^2\frac{1}{R_i+\dot{R}_i},$$ where we used
It\^o calculus to differentiate $v=1/R_i$ on the spheres,
$$v_t=-\dot{R_i}/(R_i(R_i+\dot{R}_i)),\;\;{\rm on}\;\;\partial B_i,$$
\begin{equation}\label{fineq}
\begin{split}
|\Omega|\partial_t
v_\infty\approx&\partial_t\int_{\mathbb{R}^3-\mathcal{D}}vdx=\int_{\mathbb{R}^3-\mathcal{D}}v_tdx-\displaystyle{\sum_i}dR_i[1+dR_i
R_i^{-1}] \int_{\partial B_{R_i}}vds-
m(t)\\
=&\int_{\mathbb{R}^3-\mathcal{D}}\alpha \Delta v
dx+\dot{W}(t)\int_{\mathbb{R}^3-\mathcal{D}}\sigma({\rm
dist}(x,\Gamma(t)))dx-\displaystyle{\sum_i}\int_{\partial B_{R_i}}\dot{R}_i[1+\dot{R}_iR_i^{-1}]vds-m(t)\\
=&-\int_{\cup\partial B_{R_i}}\alpha \nabla v\cdot \eta ds
+\dot{W}(t)\int_{\mathbb{R}^3-\mathcal{D}}\sigma({\rm
dist}(x,\Gamma(t)))dx- \displaystyle{\sum_i}\int_{\partial B_{R_i}}\dot{R}_i[1+\dot{R}_iR_i^{-1}]vds-m(t)\\
=&-\alpha 4\pi\sum_i R_i^2\dot{R}_i
+\dot{W}(t)\int_{\mathbb{R}^3-\mathcal{D}}\sigma({\rm
dist}(x,\Gamma(t)))dx- \displaystyle{\sum_i}\int_{\partial
B_{R_i}}\dot{R}_i[1+\dot{R}_iR_i^{-1}]vds-m(t),
\end{split}
\end{equation}
where we used Lemma \ref{itocor} (formula \eqref{formito}). So,
we arrive at
\begin{equation}\label{fineq11}
\begin{split}
\partial_t v_\infty\approx &-\frac{\alpha}{|\Omega|}
4\pi\sum_i R_i^2\dot{R}_i
+\frac{1}{|\Omega|}\dot{W}(t)\int_{\mathbb{R}^3-\mathcal{D}}\sigma({\rm
dist}(x,\Gamma(t)))dx\\
&-\frac{1}{|\Omega|}\Big{[}\displaystyle{\sum_i}\int_{\partial
B_{R_i}}\dot{R}_i[1+\dot{R}_iR_i^{-1}]vds+m(t)\Big{]}.
\end{split}
\end{equation}

Using in the above that $\alpha>>1$, we ignore the last term.
However the same argument is avoided for the noise term (being
non smooth and not comparable). Replacing \eqref{ex2} for each
sphere, and using that $|\Omega|=\alpha^{4/3}$, we derive the
next system of stochastic differential equations for the
approximating dynamics of \eqref{stef2}
\begin{equation}\label{ex22}
\dot{R_i}(t)\approx\frac{v_\infty(t)}{R_i(t)}-\frac{1}{R_i^2(t)},\;\;\;\;i=1,\cdots,I,
\end{equation}
\begin{equation}\label{fineq22}
\begin{split}
\partial_t v_\infty(t)\approx 4\pi\alpha^{-1/3}
\displaystyle{\sum_{i=1}^I} (1-R_i(t)v_\infty(t))
+\alpha^{-4/3}\dot{W}(t)\int_{\mathbb{R}^3-\cup
B_{R_i}(t)}\sigma({\rm dist}(x,\cup\partial B_{R_i}(t)))dx,
\end{split}
\end{equation}
for $B_{R_i}(t)$ the balls of constant centers $x_c^i$ and radii
$R_i(t)$ respectively. Remind that the solution $v$ of the
stochastic Stefan (the density of the sell and buy orders in the
financial setting) is approximated by \eqref{apsol1}.

Note that for $\sigma=0$ \eqref{ex22}, \eqref{fineq22} coincide
to the rigorous first order asymptotics given by \eqref{ssss0},
\eqref{ssss1}.

\begin{remark}
For more general data, we do not replace $|\Omega|$, and also keep
the second order approximation term in \eqref{fineq11}, i.e. we
do not ignore
\begin{equation}\label{imp1}
\begin{split}
\frac{1}{|\Omega|}&\Big{[}\displaystyle{\sum_i}\int_{\partial
B_{R_i}}\dot{R}_i[1+\dot{R}_iR_i^{-1}]vds+m(t)\Big{]}\\
=&\frac{1}{|\Omega|}\Big{[}\displaystyle{\sum_i}\int_{\partial
B_{R_i}}\dot{R}_i[1+\dot{R}_iR_i^{-1}]\frac{1}{R_i}ds+2\pi\displaystyle{\sum_i}(dR_i)^3
R_i^2-4\pi\sum_i
R_i\dot{R_i}^2\frac{1}{R_i+\dot{R}_i}\Big{]}\\
=&\frac{1}{|\Omega|}\Big{[}4\pi \sum_i
R_i^2\dot{R}_i[1+\dot{R}_iR_i^{-1}]\frac{1}{R_i}+2\pi\displaystyle{\sum_i}(dR_i)^3
R_i^2-4\pi\sum_i
R_i\dot{R_i}^2\frac{1}{R_i+\dot{R}_i}\Big{]}\\
=&4\pi\frac{1}{|\Omega|} \sum_i
(R_i\dot{R}_i[1+\dot{R}_iR_i^{-1}]+(dR_i)^3 R_i^2/2-
R_i\dot{R_i}^2\frac{1}{R_i+\dot{R}_i})\\
\approx& 4\pi\frac{1}{|\Omega|} \sum_i\Big{(}
R_i[1+\dot{R}_iR_i^{-1}]\Big{[}\frac{v_\infty(t)}{R_i(t)}-\frac{1}{R_i^2(t)}\Big{]}+\frac{1}{2}R_i^2\Big{[}\frac{v_\infty(t)}{R_i(t)}-\frac{1}{R_i^2(t)}\Big{]}^3\Big{)}\\
&-4\pi\frac{1}{|\Omega|}\sum_i\frac{1}{R_i(R_i+\dot{R_i})}\Big{(}v_\infty(t)-\frac{1}{R_i(t)}\Big{)}^2\\
=&4\pi\frac{1}{|\Omega|} \sum_i
[1+\dot{R}_iR_i^{-1}]\Big{(}v_\infty(t)-\frac{1}{R_i(t)}\Big{)}+2\pi\frac{1}{|\Omega|}
\sum_i\frac{1}{R_i}\Big{(}v_\infty(t)-\frac{1}{R_i(t)}\Big{)}^3\\
&-4\pi\frac{1}{|\Omega|}\sum_i\frac{1}{R_i(R_i+\dot{R_i})}\Big{(}v_\infty(t)-\frac{1}{R_i(t)}\Big{)}^2,
\end{split}
\end{equation}
and derive the next formula (by replacing once again $\dot{R}_i$
by \eqref{ex22} in the above)
\begin{equation}\label{fineq22cor}
\begin{split}
\partial_t v_\infty(t)\approx &4\pi\frac{\alpha}{|\Omega|} \displaystyle{\sum_{i=1}^I}
(1-R_i(t)v_\infty(t))\\
&-4\pi\frac{1}{|\Omega|} \sum_i
\Big{(}v_\infty(t)-\frac{1}{R_i(t)}\Big{)}-4\pi\frac{1}{|\Omega|}
\sum_i
\Big{(}v_\infty(t)-\frac{1}{R_i(t)}\Big{)}^2\\
&-2\pi\frac{1}{|\Omega|}
\sum_i\frac{1}{R_i}\Big{(}v_\infty(t)-\frac{1}{R_i(t)}\Big{)}^3
\\
&+4\pi\frac{1}{|\Omega|}\sum_i\frac{1}{v_\infty(t)-\frac{1}{R_i}+R_i^2}\Big{(}v_\infty(t)-\frac{1}{R_i(t)}\Big{)}^2\\
&+\frac{1}{|\Omega|}\dot{W}(t)\int_{\mathbb{R}^3-\cup
B_{R_i}(t)}\sigma({\rm dist}(x,\cup\partial B_{R_i}(t)))dx,
\end{split}
\end{equation}
in place of \eqref{fineq22}.
\end{remark}

For this case we may use a general scaling for defining our domain
$|\Omega|$ approximating $\mathbb{R}^3$, of the form
$x\in[0,1]\rightarrow c_s x\in[0,c_s]$, and $|\Omega|=c_s^{3}$
for $c_s>>1$. However, we also consider $\alpha$ relatively large
(since the main argument was to approximate with the static
problem formula for the derivatives of the radii).

Thus, \eqref{fineq22cor} takes the form
\begin{equation}\label{fineq33cor}
\begin{split}
\partial_t v_\infty(t)\approx &4\pi\frac{\alpha}{c_s^3} \displaystyle{\sum_{i=1}^I}
(1-R_i(t)v_\infty(t))\\
&-4\pi\frac{1}{c_s^3} \sum_i
\Big{(}v_\infty(t)-\frac{1}{R_i(t)}\Big{)}-4\pi\frac{1}{c_s^3}
\sum_i
\Big{(}v_\infty(t)-\frac{1}{R_i(t)}\Big{)}^2\\
&-2\pi\frac{1}{c_s^3}
\sum_i\frac{1}{R_i}\Big{(}v_\infty(t)-\frac{1}{R_i(t)}\Big{)}^3
\\
&+4\pi\frac{1}{c_s^3}\sum_i\frac{1}{v_\infty(t)-\frac{1}{R_i}+R_i^2}\Big{(}v_\infty(t)-\frac{1}{R_i(t)}\Big{)}^2\\
&+\frac{1}{c_s^3}\dot{W}(t)\int_{\mathbb{R}^3-\cup
B_{R_i}(t)}\sigma({\rm dist}(x,\cup\partial B_{R_i}(t)))dx.
\end{split}
\end{equation}
We may chose for example $c_s:>>1$, and treat the problem of more
general diffusion constant $\alpha$, independent from the initial
radii order $\mathcal{O}(R_i(0))$ or the centers; the $c_s$ does
not depend on $\alpha$, and can take care of the initial radii
order and of the prices-centers (the initial price vector must
belong to $\Omega$ and the diameter of $\Omega$ is equal to
$\mathcal{O}(c_s)$).

Once the Stefan problem is used in a financial setting, we would
like to consider diffusion coefficients $\alpha$ with magnitude
not depending from the initial radii, or the placement of the
centers. Hence, we propose the second order approximation formula
\eqref{fineq33cor} instead of the first order one \eqref{fineq22}.

\begin{remark}
Since the liquid phase approximates the background domain of the
Stefan problem, we may consider
$$\int_{\mathbb{R}^3-\cup B_{R_i}(t)}\sigma({\rm
dist}(x,\cup\partial
B_{R_i}(t)))dx\approx\int_{\mathbb{R}^+}\sigma(r)dr,$$ for
$\sigma(r)$ a sufficiently decaying
 function as $r\rightarrow\infty$, or of compact support (satisfying for example $\sigma(r)\sim
\mathcal{O}(r^{-(1+a)})$, for some $a>0$, as
$r\rightarrow\infty$); see also in \cite{main1} the discussion of
analogous properties for $\sigma$ for a Stefan problem posed in
dimension $1$.
\end{remark}
\subsection{Numerical experiments}
The first set of numerical experiments considers one initial ball
for the solid phase and implements numerically the first and
second order approximation stochastic differential systems
proposed. For all cases we used a double precision Matlab code
and the ODE45 routine. All initial data satisfy the proper
scaling condition \eqref{volfr}.

\subsubsection{$1$ radius, first order versus second order
asymptotics with stochastic volatility} We took $\alpha=100$ and
$R(0)=1$.

We solved numerically the first order approximation stochastic
dynamics system \eqref{ex22}, \eqref{fineq22}, with initial
condition given by \eqref{iva}, for
$c_0=\displaystyle{\int_{\mathbb{R}^+}}\sigma(r)dr=1$, and
$W(t):=\beta(t)$ the brownian motion following the normal
distribution $N(0,t)$. The noise $dW(t)$ was approximated by
using the brownian increments as follows (finite differences)
\begin{equation}\label{incr}
dW(t)\simeq \frac{\beta(t_j)-\beta(t_{j-1})}{t_j-t_{j-1}},
\end{equation}
 for
$\mathcal{O}(t_j-t_{j-1})=10^{-6}$, $j=1,\cdots,J$, where
$0=t_0<t_1<\cdots<t_J:=15$, was the time discretization of our
numerical scheme. We applied a Monte Carlo simulation for $100$
realizations (100 runs) and computed, for each realization, the
radius $R(t)$ for $t\in[0,15]$, see figure \ref{fig8}. Moreover,
we plot the value $R(t)$ for $t=15$, for each realization, see
figure \ref{fig9}. Remind that as predicted by the theory of the
deterministic parabolic Stefan problem, the case of one spherical
initial solid phase boundary has an almost constant radius
profile in time $R(t)\simeq R(0)$($=1$ there), since the constant
sphere is a solution of the quasi-static deterministic problem.
The main observation of our experiment for the stochastic case is
in contrast to the previous property. There existed quite a few
realizations where the radius vanished at finite time $t<15$,
while in other the profile was oscillating. The computed
experimental mean value of $R(t)$, for $t=15$, was equal to
$0.6096590298805101$, and thus, significantly smaller than the
initial radius $R(0)=1$.

\begin{figure}[h]
\includegraphics[width=8cm]{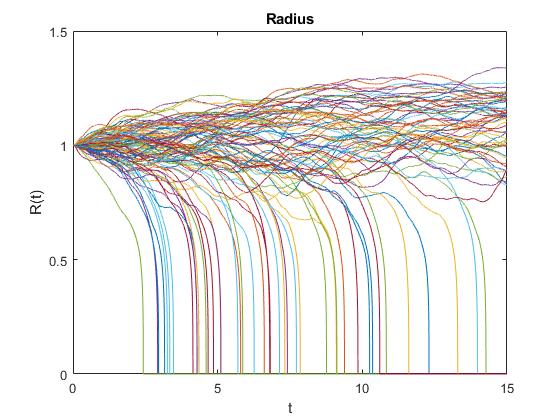}
\caption{100 realizations of $R(t)$, for $t\in[0,15]$, with first
order approximation.}\label{fig8}
\end{figure}
\begin{figure}[h]
\includegraphics[width=8cm]{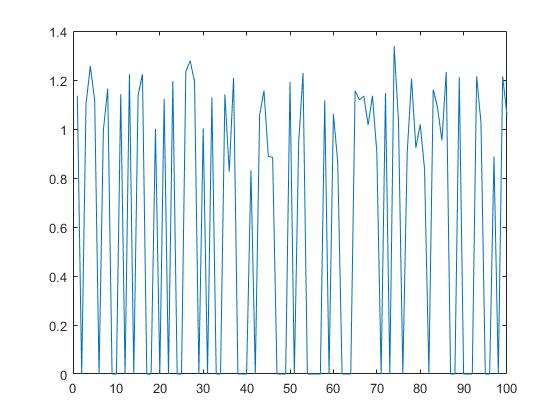}
\caption{100 realizations of $R(t)$, for $t=15$ (first order
approximation).}\label{fig9}
\end{figure}

We repeated the same experiment by using the second order
approximation for the stochastic dynamics system, \eqref{ex22},
\eqref{fineq33cor}, and \eqref{iva}, for $c_s^3=\alpha^{4/3}$. We
computed, for each realization, the radius $R(t)$ for
$t\in[0,15]$, see now figure \ref{fig8n}. We also plot the value
$R(t)$ for $t=15$, for each realization, see figure \ref{fig9n}.
Again there existed quite a few realizations where the radius
vanished at finite time $t<15$. The computed experimental mean
value of $R(t)$, for $t=15$, was equal to $0.69573807862059131$,
again, smaller than the initial radius $R(0)=1$. However, through
the $100$ realizations, the profile of $R(t)$ for $t=15$ was less
oscillating than this of the first order approximation, see fig.
\ref{fig9}, \ref{fig9n}.

\begin{figure}[h]
\includegraphics[width=8cm]{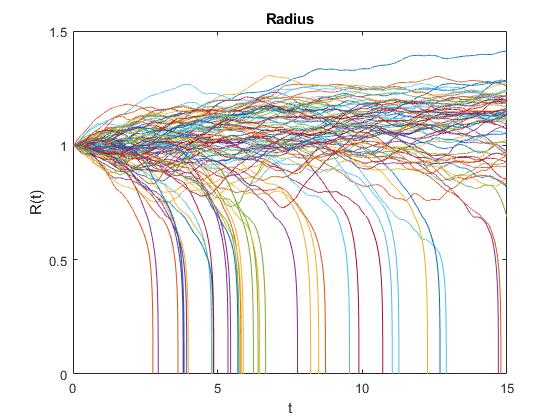}
\caption{100 realizations of $R(t)$, for $t\in[0,15]$, with second
order approximation.}\label{fig8n}
\end{figure}
\begin{figure}[h]
\includegraphics[width=8cm]{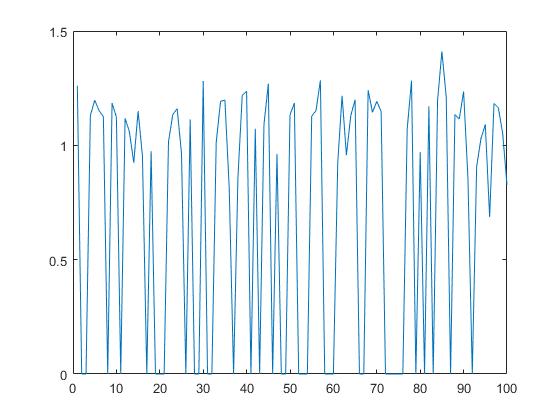}
\caption{100 realizations of $R(t)$, for $t=15$ (second order
approximation).}\label{fig9n}
\end{figure}

\subsubsection{Financial data experiment}
The next set of runs was devoted to the financial application
presented in Section \ref{finex} and the tables therein; we also
used the computed values given by \eqref{nex}, \eqref{ner},
\eqref{nea}. We considered one initial ball of center
$(3.416906675,2.714694744,3.022860941),$ and radius
$R(0)=0.016557805$, while the liquidity coefficient was given by
$\alpha=13338.83103$. Note that the above financial data happen
to satisfy the scaling condition \eqref{volfr}. We applied our
double precision Matlab code and implemented numerically the
second order approximation given by \eqref{ex22},
\eqref{fineq33cor}, with \eqref{iva}. For all runs the time length
used for the experiments was crucial and related to the computed
value of $\alpha$ by data given during $8$ consecutive minutes in
a financial day, cf. the tables in Section \ref{finex}.

The max ask price appeared in tables was  equal to $35.00$, i.e.
equal to $$\ln(35.00)=3.555348061489414e+00$$ in the logarithmic
scale. We took $c_s$ such that
\begin{equation*}
\begin{split}
c_s^3=|\Omega|>>&(4/3)\pi\ln(35.00)^3\\
=&\mbox{the volume of the ball of radius
}\ln(35.00)\\
=&1.882499980769812e+02,
\end{split}
\end{equation*}
 i.e. $c_s>>5.731192468848986e+00$ (we
note that in this experiment the radius is very small while the
max ask price in logarithmic scale is very larger, so since the
vectorial price is in $|\Omega|$ for the financial example, the
order of the measure of the vector price should be used instead
of the radius for the scaling). We took again
$c_0=\displaystyle{\int_{\mathbb{R}^+}}\sigma(r)dr=1$, and
$dW(t)$ was approximated by \eqref{incr}.

We used $c_s=5\times 10^4\times 5.731192468848986e+00$, and run
our Monte Carlo simulation for $300$ realizations in a time period
less or equal to $8$ minutes. In the first $2$ minutes the spread
(radius) had a very small increase while at the end of the $8$
minutes period the value (oscillating) was
$1.932132649058731e-02$, as shown in the next table.
$$\begin{tabular}{|c|c|}
  1.655780500000000e-02  &   0.000000000000000e+00 \;\;\mbox{minutes} \\ \hline
  1.655780500000002e-02  &   2.000000000000000e+00 \;\;\mbox{minutes}  \\ \hline
  2.028801437719118e-02  &   7.079646017699115e+00 \;\;\mbox{minutes} \\ \hline
  1.932132649058731e-02  &   8.000000000000000e+00 \;\;\mbox{minutes}  \\ 
\end{tabular}$$

When we used $2$ initial balls of different radii (but near the
radius of the previous example) under the same other data as
above, we observed the fast decrease of the smaller one, while
the larger was increasing.
\section{Appendix}
In this Appendix we present some important results of It\^o
calculus for space integrals on domains of stochastic boundary.
\begin{lemma}\label{itocor2}
Let $x\in\mathbb{R}^3,\;t\in\mathbb{R}$, and $u=u(x,t)$, $R=R(t)$,
be real stochastic processes compatible with It\^o calculus in
time, and let $u$ be smooth in space. Then for
$B_{R(t)}=:B(R(t))$ a ball in $\mathbb{R}^3$ of radius $R(t)$, it
holds that
\begin{equation}\label{formito2}
\begin{split}
\partial_t\int_{B(R(t))}u(x,t)dx=&\int_{B(R(t))}u_t(x,t)dx
+\dot{R}(t)\Big{[}1+\frac{\dot{R}(t)}{R(t)}\Big{]}\int_{\partial
B(R(t))}u(s,t)ds\\
&+\frac{(\dot{R}(t))^2}{2}\int_{\partial B(R(t))}\nabla
u(s,t)\cdot \eta(s) ds+\dot{R}(t)\int_{\partial B(R(t))}u_t(s,t)
ds,
\end{split}
\end{equation}
for $\dot{R}:=R_t=dR$ and $\eta$ the outward normal vector to
$\partial B(R)$.
\end{lemma}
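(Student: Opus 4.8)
The plan is to reduce the three-dimensional volume integral to a smooth function of the scalar radius and then apply the (one-dimensional) It\^o formula in that variable. Concretely, I introduce the auxiliary field
$$\Phi(\rho,t):=\int_{B(\rho)}u(x,t)\,dx,\qquad \rho\ge 0,$$
so that the object to be differentiated is the composite $\Phi(R(t),t)$, in which randomness enters both through the It\^o process $R(t)$ in the first slot and through the stochastic time dependence of $u$ in the second. Passing to spherical coordinates, $\Phi(\rho,t)=\int_0^{\rho}\int_{S^2}u(r\omega,t)\,r^2\,d\omega\,dr$, exhibits $\Phi$ as a genuinely smooth function of $\rho$, which is exactly what makes the scalar It\^o calculus applicable in the first argument.

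The next step is to record the $\rho$-derivatives of $\Phi$ in boundary-integral form. The first radial derivative is the coarea identity
$$\Phi_\rho(\rho,t)=\int_{\partial B(\rho)}u(s,t)\,ds,$$
and differentiating once more, using $\int_{\partial B(\rho)}u\,ds=\rho^2\int_{S^2}u(\rho\omega,t)\,d\omega$ together with $ds=\rho^2\,d\omega$ and $\eta=\omega$ on $\partial B(\rho)$, gives
$$\Phi_{\rho\rho}(\rho,t)=\frac{2}{\rho}\int_{\partial B(\rho)}u(s,t)\,ds+\int_{\partial B(\rho)}\nabla u(s,t)\cdot\eta(s)\,ds.$$
I also note the mixed derivative $\Phi_{\rho t}(\rho,t)=\int_{\partial B(\rho)}u_t(s,t)\,ds$, obtained by differentiating $\Phi_\rho$ in $t$ under the integral sign, which is legitimate since $u$ is smooth in space and the sphere $\partial B(\rho)$ is fixed once $\rho$ is fixed.

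With these formulas in hand I expand $\Phi(R(t),t)$ to second order in the increment of $R$, retaining the quadratic-variation contribution $(dR)^2=\dot R^2$ as dictated by It\^o calculus and writing $\dot R:=dR$. The explicit time derivative produces $\Phi_t=\int_{B(R)}u_t\,dx$, the first term of \eqref{formito2}; the first-order boundary term $\Phi_\rho\,dR$ together with the part of $\tfrac12\Phi_{\rho\rho}(dR)^2$ carrying the curvature factor $2/R$ recombines algebraically as
$$\dot R\int_{\partial B(R)}u\,ds+\frac{\dot R^2}{R}\int_{\partial B(R)}u\,ds=\dot R\Big[1+\frac{\dot R}{R}\Big]\int_{\partial B(R)}u\,ds,$$
giving the second term; the remaining half of $\tfrac12\Phi_{\rho\rho}(dR)^2$ supplies the normal-derivative term $\tfrac{\dot R^2}{2}\int_{\partial B(R)}\nabla u\cdot\eta\,ds$; and the mixed second-order contribution $\Phi_{\rho t}\,dR$, arising from evaluating the shell integrand at the advanced time, yields the last term $\dot R\int_{\partial B(R)}u_t\,ds$. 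Setting the quadratic variation to zero collapses every second-order piece and recovers the deterministic transport formula \eqref{formito2d}, a useful consistency check.

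The main obstacle is the careful second-order It\^o bookkeeping for the composite $\Phi(R(t),t)$: one must justify differentiating under the integral sign when both the domain and the integrand are random, and correctly attribute the $(dR)^2$ contributions. In particular the surface integral of $\nabla u\cdot\eta$, absent from the deterministic formula, arises solely from the curvature factor $2/\rho$ in $\Phi_{\rho\rho}$ combined with the quadratic variation of the moving boundary, while the cross term $\dot R\int_{\partial B(R)}u_t\,ds$ is the most delicate to pin down and must be tracked as the genuine mixed contribution in the expansion. The radial reduction is precisely what turns the whole computation into a one-dimensional It\^o expansion, so that the only essentially stochastic step is the scalar It\^o formula applied to the smooth map $\rho\mapsto\Phi(\rho,t)$ evaluated along $R(t)$; the analogous lemma for the exterior domain $\mathbb{R}^3-B(R(t))$ then follows by the same argument with the reversed boundary orientation.
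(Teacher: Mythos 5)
Your proposal follows essentially the same route as the paper's own proof: you introduce the same auxiliary function $\Phi(\rho,t)=\int_{B(\rho)}u(x,t)\,dx$ (the paper's $g(y,t)$), derive the identical boundary-integral expressions for $\Phi_\rho$, $\Phi_{\rho\rho}$, $\Phi_t$, $\Phi_{\rho t}$ via spherical coordinates, and apply the same formal two-variable second-order It\^o/Taylor expansion retaining the quadratic-variation term $(dR)^2$ and the mixed term $\Phi_{\rho t}\,dR$, which is exactly the paper's formula \eqref{iitoi}. The bookkeeping and the recombination into the four terms of \eqref{formito2} coincide with the paper's proof of Lemma \ref{itocor2}, so the proposal is correct in the same formal sense as the original.
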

\begin{proof}
Set
$$g(y,t):=\int_{B(y)}u(x,t)dx.$$

We aim to compute $\partial_t(g(R(t),t))$, i.e.
$$\partial_t\int_{B(R(t))}u(x,t)dx.$$

It\^o formula (2 variables Taylor) when $y$, $t$ depend
stochasticly, while $z$, $t$ do not depend stochasticly yields
\begin{equation}\label{iitoi}
\partial_t
g(y,z)=g_y\frac{y_t}{1!}+g_{yy}\frac{(y_t)^2}{2!}+g_{z}\frac{z_t}{1!}+g_{yz}\frac{y_tz_t}{1!1!}.
\end{equation}

In our case, $R(t)$ is stochastic, and $t$, $R(t)$ stochasticly
dependent, while $t$, $t$ are not depending stochasticly. So, by
applying \eqref{iitoi}, for $y:=R(t)$, $z=t$ and $y_t=\dot{R}$,
$z_t=1$, we obtain
\begin{equation}\label{l1}
\partial_t(g(R(t),t))=\dot{R}(t)g_y(R(t),t)+\frac{(\dot{R}(t))^2}{2}g_{yy}(R(t),t)+g_t(R(t),t)+g_{ty}(R(t),t)\dot{R}(t),
\end{equation}
i.e. for $d$ denoting the differentiation in $t$
$$d(g(R(t),t))=dR(t)g_y(R(t),t)+\frac{(dR(t))^2}{2}g_{yy}(R(t),t)+dg(R(t),t)+dg_{y}(R(t),t)dR(t).$$

 Considering the functional formula of $g$ as a function
$g:\mathbb{R}\times\mathbb{R}\rightarrow\mathbb{R}$, we will
compute $g_y$, $g_{yy}$, $g_t$ and  $g_{ty}$.

Moreover, we remind that our processes are smooth in space
variables and differentiation in space follows the usual calculus
(not It\^o).

By using spherical coordinates, we have
$$g(y,t)=\int_0^y\int_0^{2\pi}\int_0^\pi
\tau^2\hat{u}(\tau,\theta,\phi,t)\sin(\theta)d\theta d\phi
d\tau.$$ So, we obtain
$$g_y(y,t)=\int_0^{2\pi}\int_0^\pi
y^2\hat{u}(y,\theta,\phi,t)\sin(\theta)d\theta
d\phi=\int_{\partial B(y)}u(s,t)ds,$$ while
\begin{equation*}
\begin{split}
g_{yy}(y,t)=&2y\int_0^{2\pi}\int_0^\pi
\hat{u}(y,\theta,\phi,t)\sin(\theta)d\theta
d\phi+\int_0^{2\pi}\int_0^\pi
y^2\partial_y[\hat{u}(y,\theta,\phi,t)]\sin(\theta)d\theta
d\phi\\
=&\frac{2}{y}\int_{\partial{B}(y)} u(s,t)ds+\int_{\partial{B}(y)}
\nabla u(s,t)\cdot\eta(s)ds.
\end{split}
\end{equation*}
Moreover, we have
$$g_t(y,t)=\int_{B(y)}u_t(x,t)dx,$$
and differentiation in $t$ of $g_y$ gives
$$g_{yt}(y,t)=\int_{\partial
B(y)}u_t(s,t)ds.$$

 Replacing in \eqref{l1} we derive the result.
\end{proof}

The next Lemma is a direct result.

\begin{lemma}\label{itocor}
Let $x\in\mathbb{R}^3,\;t\in\mathbb{R}$, and $u=u(x,t)$, $R=R(t)$,
be real stochastic processes compatible with It\^o calculus in
time, and let $u$ be smooth in space. If $B_{R(t)}=:B(R(t))$ is a
ball in $\mathbb{R}^3$ of radius $R(t)$, it holds that
\begin{equation}\label{formito}
\begin{split}
\partial_t\int_{\mathbb{R}^3-B(R(t))}u(x,t)dx=&\int_{\mathbb{R}^3-B(R(t))}u_t(x,t)dx
-\dot{R}(t)\Big{[}1+\frac{\dot{R}(t)}{R(t)}\Big{]}\int_{\partial
B(R(t))}u(s,t)ds\\
&-\frac{(\dot{R}(t))^2}{2}\int_{\partial B(R(t))}\nabla
u(s,t)\cdot \eta(s) ds-\dot{R}(t)\int_{\partial B(R(t))}u_t(s,t)
ds,
\end{split}
\end{equation}
if the appearing integrals are well defined. Here,
$\dot{R}:=R_t=dR(t)$, and $\eta$ is the outward normal vector to
$\partial B(R)$.
\end{lemma}

\end{document}